\documentclass[11pt]{article}
\usepackage{subfigure}
\usepackage{authblk}
\usepackage{hyperref}
\usepackage{float}

\usepackage{amssymb,amsthm,amsmath,mathrsfs,fullpage,multirow}
\usepackage{tikz}
\usetikzlibrary{matrix}

\renewcommand\S{\mathcal S}

\newtheorem{theorem}{Theorem}[section]

\newtheorem{proposition}[theorem]{Proposition}
\newtheorem{corollary}[theorem]{Corollary}

\theoremstyle{definition}

\newtheorem{remark}[theorem]{Remark}
\newtheorem{example}[theorem]{Example}

\makeatletter
\def\keywords{\xdef\@thefnmark{}\@footnotetext}
\makeatother

\newcommand{\thistheoremname}{}
\newtheorem*{genericthm*}{\thistheoremname}
\newenvironment{namedthm*}[1]
  {\renewcommand{\thistheoremname}{#1}%
   \begin{genericthm*}}
  {\end{genericthm*}}

\usepackage{enumerate}

\title{Pattern restricted parking functions with limited preferences}
\author[1]{Kassie Archer}
\author[1]{Robert P. Laudone}
\affil[1]{{\small Department of Mathematics, United States Naval Academy, Annapolis, MD 21402}}
\affil[ ]{{\small Email: karcher@usna.edu, laudone@usna.edu}}
\date{}

\begin{document}

\keywords{2020 \emph{Mathematics Subject Classification.} Primary 05A05; Secondary 05A15}%
\keywords{\emph{Keywords:} Parking functions; pattern avoidance}%

\maketitle

\begin{abstract}
A parking function is said to have $\ell$-limited preferences if there are at most $\ell$ copies of each element in the parking function. In this paper, we define a notion of pattern avoidance for parking functions and enumerate those $2$-limited parking functions that avoid certain patterns.
\end{abstract}

\section{Introduction}

In classical pattern avoidance, a word $\pi = \pi_1\ldots \pi_n$ contains another word (or pattern) $\sigma=\sigma_1\ldots\sigma_k$ if there is a $k$-length subsequence of $\pi$ that appears in the same relative order as $\sigma$, and $\pi$ avoids $\sigma$ if it does not contain it. This notion of pattern avoidance has been applied to traditional permutations, $k$-regular words, inversion sequences, and more. In this paper, we consider classical pattern avoidance in the set of parking functions, which are words that describe a valid list of parking preferences for $n$ cars parking in $n$ spaces.

While pattern avoidance has been considered previously for parking functions, most of this work does not treat the parking function itself as a word that contains or avoids patterns. Instead, a permutation is associated to that parking function in some way, and pattern avoidance for that permutation is considered. For instance, in \cite{J25}, the author considers pattern avoidance in the permutation describing the ultimate parking order associated to a parking function. In \cite{AP23}, the authors instead consider pattern avoidance in permutations associated to labeled Dyck paths obtained from parking functions; an association originally introduced in \cite{QR19} to study pattern avoidance in ordered set partitions.

The classical notion we consider in this paper, where a parking function is treated as a word, was also considered in \cite{JM09}. In that paper, the authors prove that certain patterns are ``strongly'' Wilf-equivalent. This notion of strong Wilf-equivalence implies some results for parking functions. For example, it implies that the number of parking functions avoiding 212 equals the number that avoid 122. However, direct enumeration of classical pattern avoidance classes for parking functions remains an open problem.

The parking functions considered in this paper are those with limited preferences, meaning that only a restricted number of cars can prefer the same spot. This fits into a broader trend of studying restricted or relaxed conditions on parking functions. For example, if one requires that whenever a car prefers spot $k \ge 2$, the spots $\{1,\dots,k-1\}$ must also be preferred by some car, these parking functions are Cayley permutations. Pattern avoidance in this setting was recently studied in \cite{CCEG}. Alternatively, allowing cars to ``back up" before moving forward recovers Naples parking functions, studied in \cite{FV}. 

Viewed as a word, our limited preference restriction means that any given letter can only occur a bounded number of times. This naturally parallels the study of pattern avoidance in multi-permutations with restricted repetitions. Examples of this include avoidance in noncrossing and nonnesting \cite{AGPS19,AL,EL24} permutations, which are $2$-regular, or canon permutations \cite{L26} which can be $k$-regular for a fixed $k \ge 1$.

In Section \ref{sec:background}, we introduce the necessary background and notation used throughout the paper. In Section \ref{sec: limited-pref}, we study parking functions with limited preferences and no other avoidance conditions, explaining a bijection with trees where each vertex has a limited number of children. Then in Section \ref{sec: 111-single}, we enumerate parking functions with 2-limited preferences that avoid patterns of length 3, while in Section \ref{sec:111-double}, we enumerate those avoiding pairs of these patterns. In Section~\ref{sec:1122}, we consider parking functions with $2$-limited preferences that avoid patterns which are permutations of $\{1,1,2,2\}$, and in Section~\ref{sec: additional} we collect some further results and directions.

\section{Background and Notation} \label{sec:background}

In classical pattern avoidance, we say that a permutation $\pi=\pi_1\ldots\pi_n$ contains a pattern $\sigma = \sigma_1\ldots \sigma_k$ if there is some $i_1<\cdots<i_k$ so that $\pi_{i_r}<\pi_{i_s}$ if and only if $\sigma_r<\sigma_s$ for any $1\leq r,s\leq k$. If $\pi$ does not contain $\sigma,$ we say $\pi$ avoids $\sigma.$ This definition has been extended to various other combinatorial objects including multi-permutations, words, Stirling permutations, Cayley permutations, etc. In this case, since repetition is allowed, the additional requirement on containment that $\pi_{i_r}=\pi_{i_s}$ if and only if $\sigma_r=\sigma_s$ is included. It is exactly this definition of containment and avoidance that we will use for parking functions in this paper. 

A word $\pi =\pi_1\pi_2\ldots\pi_n$ of size $n$ with $\pi_i\in[n]$ for all $i$ is a parking function if $|\{j : \pi_j\leq m\}|\ge m$ for all $m.$ In other words, if we sort the elements of $\pi$ from smallest to largest into a sequence $q_1q_2\ldots q_n$, then $q_i\le i$ for all $i$. For example, the words $34152$ and $33122$ are both parking functions of size 5. 

The terminology for parking functions comes from the following interpretation. 
Given a sequence of $n$ cars and a line of $n$ parking spots, we can think of $\pi_i$ as the parking spot preference for car $i$. A car will drive to its preferred parking spot and park if there is no other car, and otherwise it takes the next available spot. A parking function is a sequence of preferences where every car parks in one of the $n$ spots.

Let $\mathcal{P}_n$ denote the set of parking functions of size $n$.
We will say that a parking function $\pi=\pi_1\ldots \pi_n\in\mathcal{P}_n$ avoids a pattern $\sigma=\sigma_1\ldots \sigma_k$ (not necessarily a parking function itself, but rather a word on $\{1,2,\ldots, m\}$ for some $m\leq k$) if there is no subsequence of $\pi$ that appears in the same relative order as $\sigma$ in the classical sense, as described in the previous paragraph. For example, the parking function $454122862$ contains the pattern 121 since 454, 282, and 262 are all copies of the pattern 121, but it avoids 212 since between any two equal elements $j$, there is no element $i$ with $i<j$.
Let $p_n(S)$ denote the number of parking functions of length $n$ avoiding a set of patterns $S$.

We say a parking function $\pi=\pi_1\pi_2\ldots \pi_n$ has $\ell$-limited preferences if for each $1\leq j \leq n$, we have $|\{i : \pi_i=j\}|\leq \ell.$ In other words, each element can appear at most $\ell$ times in the parking function $\pi.$ In the language of parking function pattern avoidance, this corresponds to parking functions avoiding the pattern $1^{\ell+1}$ (i.e., the pattern composed of $\ell+1$ copies of 1). For example, the parking function 454122862 avoids $1111$ but contains $111$. We would say it has 3-limited preferences.

\section{Limited preferences} \label{sec: limited-pref}

There is a very nice formulation for the number of parking functions with $\ell$-limited preferences. In this section, we describe two ways of approaching this question, one in terms of trees and another in terms of Dyck paths. 

 Let $\mathcal{T}_n$ denote the set of rooted labeled trees on $n+1$ vertices with the root labeled 0 and the other vertices labeled with elements in $[n]$.
\begin{proposition}
    Parking functions of size $n$ with $\ell$-limited preferences are in bijection with trees in $\mathcal{T}_n$ where each vertex has at most $\ell$ children.
\end{proposition}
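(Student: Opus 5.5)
We will use the classical breadth-first-search bijection between parking functions and rooted labeled trees, and check that under it the number of cars preferring spot $j$ equals the number of children of a specific vertex. Given $\pi\in\mathcal{P}_n$, let $c_j=|\{i:\pi_i=j\}|$ for $1\le j\le n$. The parking condition says precisely that $c_1+\cdots+c_m\ge m$ for every $m$, with $c_1+\cdots+c_n=n$. This is the same as saying that the sequence $(c_1,\dots,c_n)$ is the child-count sequence of a plane tree on $n+1$ vertices read in breadth-first order; this is the Łukasiewicz-type condition, i.e.\ the queue never empties before all $n+1$ vertices are generated.

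\textbf{Forward map.} Build the tree by a queue process. Start with the root $0$ in the queue. At step $j=1,2,\dots,n$, remove the $j$-th vertex $v_j$ from the queue (with $v_1=0$). Its children are the cars $i$ with $\pi_i=j$, and we append them to the queue in increasing order of label. The inequality $c_1+\cdots+c_{m}\ge m$ guarantees that when we process the $(m+1)$-st vertex, at least $m+1$ vertices have been discovered, so the process never stalls. Every car $i\in[n]$ is attached exactly once, namely as a child of $v_{\pi_i}$. Hence we obtain a tree in $\mathcal{T}_n$ in which $v_j$ has exactly $c_j$ children.

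\textbf{Inverse map.} Given $T\in\mathcal{T}_n$, order its vertices by breadth-first search from $0$, visiting children in increasing label order; call this order $v_1=0,v_2,\dots,v_{n+1}$. Then define $\pi_i=j$ where $v_j$ is the parent of $i$. Since a parent precedes its children in BFS order, vertex $v_{m+1}$ has a parent among $v_1,\dots,v_m$. Thus the children of $v_1,\dots,v_m$ include $v_2,\dots,v_{m+1}$, which gives $c_1+\cdots+c_m\ge m$, so $\pi$ is a parking function. The two maps are mutually inverse because both use the same canonical ordering: the BFS order with children sorted by label is exactly the queue order of the forward construction.

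\textbf{Restriction.} Under this bijection the multiset of child counts of $T$ is $\{c_1,\dots,c_n\}$ together with $c_{n+1}=0$ for the last vertex. So $\pi$ has $\ell$-limited preferences if and only if every vertex of $T$ has at most $\ell$ children, which proves the proposition.

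The only delicate step is verifying well-definedness and invertibility, namely that the BFS order reconstructed from $T$ agrees with the queue order used to build it. This follows by induction on $j$: the first $j$ vertices processed coincide in both procedures, and the children of each are appended in the same sorted order.
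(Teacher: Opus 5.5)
Your proposal is correct and takes the same route as the paper. The paper cites the Chassaing--Marckert bijection, in which $i$ is a child of the vertex in position $\pi_i$ of the breadth-first search list, and observes that the multiplicity of a value becomes the number of children of that vertex; you have written out that bijection and its verification in full.
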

The proposition follows directly from well-known bijections between $\mathcal{P}_n$ and $\mathcal{T}_n$, including the Foata-Riordan bijection \cite{FR74} and the Chassaing-Marckert bijection \cite{CM01}. In the Foata-Riordan bijection, vertex $i$ is a child of vertex $j$ exactly when the $i$-th car prefers the spot immediately after where the $j$-th car actually parked. Since there are at most $\ell$ cars that prefer any particular spot, there will be at most $\ell$ children of any particular vertex. In the Chassaing-Marckert bijection, for a given parking function $\pi,$ the corresponding tree has vertex $i$ as a child of vertex $j$ if $\pi_i$ is  equal to the position of $j$ in the breadth-first search list. Since each value appears at most $\ell$ times, each vertex has at most $\ell$ children.

Since it will be useful later, we will also describe the enumeration of parking functions with $\ell$-limited preferences (i.e., those that avoid $1^{\ell+1}$) in terms of Dyck paths. Let us first note that weakly-increasing parking functions are in bijection with Dyck paths \cite{Yan2015}. Such a bijection can be described simply: the number of U's between the $(i-1)$st and $i$th D in a Dyck path is equal to the number of $i$s in the corresponding weakly-increasing parking function. Note that those Dyck paths that avoid $U^{\ell+1}$ are thus in correspondence with weakly increasing parking functions that avoid $1^{\ell+1}.$ 
We are now ready to find a generating function for $\ell$-limited preference parking functions.

Let $a_{n,k_2,\dots,k_\ell}$ denote the number of Dyck paths of length $2n$ with at most $\ell$ consecutive $U$'s and exactly $k_i$ occurrences of (maximal) $U^{i}$. For example, the Dyck path \[UUUD\,UDD\,UUUD\,UUDDDDD\,UD\,UD\] has $k_1=3, k_2 = 1$, and $k_3=2.$ Let $A(x,y_2,\dots,y_\ell) = \sum_{n,k_2,\dots,k_\ell} a_{n,k_2,\dots,k_\ell} x^n y_2^{k_2}\cdots y_\ell^{k_\ell}$ be the corresponding generating function.

\begin{theorem} \label{thm: 1-ell-avoidance}
    For $\ell \geq 2$, if we let  $A:=A(x,y_2,\dots,y_\ell)$, then
    \[
    A = 1 + xA + \sum_{i = 2}^\ell y_i(xA)^i
    \]
\end{theorem}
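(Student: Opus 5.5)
We prove the functional equation by a first-return decomposition of Dyck paths, organized by the length of the initial run of up-steps. Fix a nonempty Dyck path $P$ with at most $\ell$ consecutive $U$'s, and let $i\in\{1,\dots,\ell\}$ be the length of its first maximal run of $U$'s, so $P$ begins $U^iD$. The empty path contributes the term $1$. The statement we aim for is that paths with first run of length $i$ are counted by $x^i A^i$ times a marker $y_i$ when $i\ge 2$, with no marker when $i=1$.

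For the decomposition, we read $P$ as follows. The $i$ initial up-steps rise to height $i$. For $j=i,i-1,\dots,1$, consider the portion of $P$ after the path first comes down from height $j$ to height $j-1$, up to the next time it comes down from height $j-1$ to $j-2$ (or, for $j=1$, up to the end). Each such segment, when properly shifted, is an arbitrary Dyck path. More cleanly, we write
\[P = U^i\, D\, Q_i\, D\, Q_{i-1}\cdots D\, Q_1,\]
where each $Q_j$ is a (possibly empty) Dyck path. Concretely, $Q_i$ is the excursion at height $i-1$ after the first $D$, ending before the first step down to height $i-2$, and so on, with $Q_1$ being the remainder at height $0$. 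We need to check that this factorization is unique and that every choice of $Q_1,\dots,Q_i$ gives a valid path. Uniqueness follows from taking the first-passage times to heights $i-1,i-2,\dots,0$ after the initial run. Validity holds because concatenating these pieces returns to height $0$ and never goes below it.

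The main point to verify is that the statistics decompose additively. The semilength of $P$ is $i+\sum_j |Q_j|$. The maximal runs of $U$'s in $P$ are the initial run $U^i$ together with the runs inside the $Q_j$. No run merges across a boundary, because each $Q_j$ is preceded by a $D$, and each nonempty $Q_j$ ends in a $D$ and is followed by a $D$ or by the end of the path. The initial run is not extended, since it is followed by $D$. Hence the bound of at most $\ell$ consecutive $U$'s holds for $P$ if and only if $i\le\ell$ and it holds for each $Q_j$. The weight of $P$ is therefore $x^i y_i \prod_j x^{|Q_j|}\prod_k y_k^{k_k(Q_j)}$, where we set $y_1=1$, since $k_1$ is not tracked.

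Summing over all $Q_1,\dots,Q_i$ gives $y_i x^i A^i$ for each $i$. Summing over $1\le i\le\ell$ and adding $1$ for the empty path yields
\[A = 1 + xA + \sum_{i=2}^{\ell} y_i (xA)^i.\]
The only delicate step is the run-merging check above; the rest is routine bookkeeping.
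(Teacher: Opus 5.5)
Your proof is correct and is essentially the paper's argument. The paper also splits a nonempty path by the length $i$ of its initial run as $U^iDB_1DB_2\cdots DB_i$ (your $Q_i,\dots,Q_1$ in reverse order), which contributes $xA$ for $i=1$ and $y_i(xA)^i$ for $2\le i\le\ell$; your explicit check that runs of $U$'s cannot merge across the factorization boundaries is a detail the paper leaves implicit.
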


\begin{proof}
    Given such a Dyck path, it is either empty, of the form $UDB$ where $B$ is another such Dyck path, or $U^{i}DB_1D B_2 D \cdots D B_i$ where each $B_j$ is another such Dyck path. As we range over $i = 2,\dots,\ell$, each of these is recorded by $y_i (xA)^i$. This covers all the possible ways such a Dyck path would start and recovers the desired functional equation.
\end{proof}

\begin{theorem}\label{thm:Pell}
    For $n \geq 0$ and $\ell\geq 2$, $p_n(1^{\ell+1})$ has exponential generating function
    \[
    P_\ell(x) = \sum_{n \geq 0} \frac{p_n(1^{\ell+1})}{n!} x^n = A(x,\tfrac{1}{2!},\tfrac{1}{3!},\tfrac{1}{4!},\dots,\tfrac{1}{\ell!})
    \]
    where $A$ is defined as in Theorem \ref{thm: 1-ell-avoidance}.
\end{theorem}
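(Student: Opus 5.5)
Every parking function is a rearrangement of a unique weakly increasing parking function. We therefore count $\ell$-limited parking functions by first choosing the sorted word and then counting its distinct rearrangements. Under the bijection of \cite{Yan2015} recalled above, weakly increasing parking functions of size $n$ that avoid $1^{\ell+1}$ correspond to Dyck paths of semilength $n$ with at most $\ell$ consecutive $U$'s. Since rearranging does not change how many times each value occurs, every rearrangement of such a word still avoids $1^{\ell+1}$. Moreover, the parking condition $|\{j:\pi_j\le m\}|\ge m$ depends only on the multiset of entries, so every rearrangement is again a parking function. Hence
\[
p_n(1^{\ell+1}) = \sum_{D} \#\{\text{distinct rearrangements of the word of } D\},
\]
where the sum runs over Dyck paths $D$ of semilength $n$ with no $U^{\ell+1}$.

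Next we compute the number of rearrangements for a fixed $D$. Each maximal run $U^i$ in $D$ (with $1\le i\le \ell$) corresponds to a value that appears exactly $i$ times in the word, and values that do not appear correspond to empty runs. The number of distinct rearrangements is therefore the multinomial coefficient
\[
\frac{n!}{\prod_{\text{runs}} i!} = \frac{n!}{\prod_{i=2}^{\ell} (i!)^{k_i}},
\]
where $k_i$ is the number of maximal runs $U^i$; runs with $i=1$ contribute $1!=1$. Grouping the paths by semilength and by the statistics $(k_2,\dots,k_\ell)$ gives
\[
\frac{p_n(1^{\ell+1})}{n!} = \sum_{k_2,\dots,k_\ell} a_{n,k_2,\dots,k_\ell} \prod_{i=2}^\ell \left(\frac{1}{i!}\right)^{k_i}.
\]

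Finally, multiply by $x^n$ and sum over $n$. The left side becomes $P_\ell(x)$, and the right side is exactly $A(x,y_2,\dots,y_\ell)$ with $y_i=1/i!$, which proves the claim. The evaluation is legitimate as an identity of formal power series in $x$. For fixed $n$ only finitely many paths contribute, and the functional equation of Theorem~\ref{thm: 1-ell-avoidance} determines $A$ uniquely as a power series in $x$ with polynomial coefficients in the $y_i$, so substituting constants for the $y_i$ causes no difficulty.

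The main point needing care is the first step: checking that avoiding $1^{\ell+1}$ and being a parking function are both invariant under rearrangement. Once that is established, grouping the words by their sorted version reduces the count to the multinomial computation and the statistic correspondence between run lengths and multiplicities.
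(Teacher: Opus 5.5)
Your proposal is correct and follows the paper's proof: group the $\ell$-limited parking functions by their sorted word, identify multiplicities with maximal $U$-runs via the Dyck path correspondence, and count rearrangements by the multinomial $n!/\prod_{i}(i!)^{k_i}$. Your explicit checks that rearranging preserves both the parking condition and $1^{\ell+1}$-avoidance, and your remark that substituting $y_i = 1/i!$ is legitimate, fill in details the paper leaves implicit.
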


\begin{proof}
    The number of parking functions with $n$ cars avoiding $1^{\ell+1}$ is exactly
    \[
    \sum_{(k_2,\dots,k_\ell)\in\mathcal J_n} \frac{n!\, a_{n,k_2,k_3,\dots,k_\ell}}{(2!)^{k_2}(3!)^{k_3}\cdots(\ell!)^{k_\ell}},
    \qquad
    \mathcal J_n=\Big\{(k_2,\dots,k_\ell)\in\mathbb Z_{\ge0}^{\ell-1}\;:\;\sum_{j=2}^\ell jk_j\le n\Big\},
    \]
    because $a_{n,k_2,k_3,\dots,k_\ell}$ is the number of weakly increasing parking functions of length $n$ in which, for each $i$, exactly $k_i$ distinct values have multiplicity $i$.  Each such weakly increasing parking function can be rearranged in $n!/\prod_i (i!)^{k_i}$ distinct ways. Dividing by $n!$ and summing over $n$ gives $A(x,\tfrac1{2!},\tfrac1{3!},\dots,\tfrac1{\ell!})$ by Theorem~\ref{thm: 1-ell-avoidance}.
\end{proof}

\begin{remark}
    Using Theorems \ref{thm:Pell} and \ref{thm: 1-ell-avoidance} in combination with Lagrange inversion, one can show that for $n \geq 1$,
    \[
    p_n(1^{\ell+1}) = (n!)^2 \sum_{\mathcal{J}_n} \frac{1}{\left(1 + \sum_{j=2}^\ell (j-1)k_j\right)! \left(n - \sum_{j=2}^\ell j k_j\right)! \prod_{j=2}^\ell \left(k_j! (j!)^{k_j}\right)}
    \]
    where $\mathcal{J}_n = \{(k_2,\dots,k_\ell) \in \mathbb{Z}^{\ell-1}_{\geq 0} \; | \; \sum_{j=2}^\ell j k_j \le n\}$.
\end{remark}

In this paper, we are mainly interested in 111-avoiding parking functions, corresponding to $\ell = 2$. In this case, we have $A(x,y) = \sum_{n,k}a_{n,k}x^ny^k$ equal to
\[
A(x,y) = \frac{1-x-\sqrt{1-2x+x^2-4x^2y}}{2x^2y}.
\]
We obtain the following result as a corollary to Theorem~\ref{thm:Pell}.
\begin{corollary} \label{thm: 111}
    For $n \geq 0$, $p_n(111)$ has exponential generating function 
    \[
    P_2(x) = \sum_{n \geq 0} \frac{p_n(111)}{n!} x^n = \frac{1-x-\sqrt{1-2x-x^2}}{x^2}
    \]
\end{corollary}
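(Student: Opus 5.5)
The corollary is the case $\ell=2$ of Theorem~\ref{thm:Pell}. Since $111=1^{3}$, it gives $P_2(x)=A(x,\tfrac12)$, where $A=A(x,y)$ is the generating function of Theorem~\ref{thm: 1-ell-avoidance} with the single variable $y=y_2$. The plan is to solve the functional equation for $\ell=2$ in closed form and then substitute $y=1/2$.

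For $\ell=2$, Theorem~\ref{thm: 1-ell-avoidance} reads $A=1+xA+y x^2A^2$, that is,
\[
yx^2A^2-(1-x)A+1=0 .
\]
The quadratic formula gives
\[
A=\frac{1-x\pm\sqrt{(1-x)^2-4x^2y}}{2x^2y}.
\]
To choose the sign, we use that $A$ must be a formal power series with $A(0,y)=1$. With the plus sign the numerator tends to $2$ as $x\to 0$, so the expression has a pole at $x=0$. With the minus sign the numerator is $1-x-(1-x)(1-2x^2y/(1-x)^2-\cdots)=2x^2y+O(x^3)$, so the quotient is a power series with constant term $1$. Hence
\[
A(x,y)=\frac{1-x-\sqrt{1-2x+x^2-4x^2y}}{2x^2y},
\]
which is the formula displayed just before the corollary. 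This sign choice is the only step that needs any care, and checking the constant term settles it.

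Next we substitute $y=\tfrac12$. The denominator becomes $2x^2\cdot\tfrac12=x^2$, and the radicand becomes $1-2x+x^2-2x^2=1-2x-x^2$. This yields
\[
P_2(x)=\frac{1-x-\sqrt{1-2x-x^2}}{x^2}.
\]

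The substitution is legitimate because $A(x,y)$ is a power series in $x$ whose coefficients are polynomials in $y$: the exponent $k$ of $y$ satisfies $2k\le n$. So specializing $y$ coefficientwise is well defined and commutes with the algebraic manipulation. As a sanity check, expanding the result gives $1+x+\tfrac32x^2+\cdots$. This matches $p_0=1$, $p_1=1$, and $p_2/2!=3/2$, since all three parking functions $11,12,21$ of size 2 avoid $111$.
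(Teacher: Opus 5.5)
Your proof is correct and follows the paper's route: specialize Theorem~\ref{thm:Pell} to $\ell=2$, solve the quadratic $A=1+xA+yx^2A^2$ to get the closed form of $A(x,y)$ displayed before the corollary, and set $y=\tfrac12$. Your sign-choice argument and your check that coefficientwise specialization is legitimate fill in details the paper leaves implicit.
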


\begin{remark}\label{rem: motzkin}
    When $\ell=2$, $A(x,y)$ is a refinement of the Motzkin numbers. Indeed, the map $F\leftrightarrow UD$, $U\leftrightarrow UUD$, $D\leftrightarrow D$ is a bijection between Motzkin paths of length $n$ with $k$ up steps and Dyck paths of semi-length $n$ with no $UUU$ and exactly $k$ occurrences of $UU$. For example, $UFUDD$ corresponds to $UUDUDUUDDD$. Choosing the positions of the $2k$ non-level steps and then a Dyck path of semi-length $k$ on them gives the explicit formula
    \begin{equation}\label{eq: ank}
        a_{n,k}=\binom{n}{2k}C_k ,
    \end{equation}
    where $C_k$ is the $k$-th Catalan number. More generally, for $\ell\geq 2$ the series $A(x,1,\dots,1)$ counts Dyck paths with no $U^{\ell+1}$, equivalently ordered trees in which every vertex has at most $\ell$ children.
\end{remark}

\section{Avoiding $111$ and a single pattern of length 3} \label{sec: 111-single}

In this section, we consider those parking functions  with 2-limited preferences that avoid a single additional pattern of length three on alphabet $\{1,2\}.$ There are six such patterns. Clearly, since the reverse of a parking function with 2-limited preferences is still a parking function with 2-limited preferences, those that avoid 211 are equinumerous with those that avoid 112, and similarly with the patterns 221 and 122. That leaves four cases to consider. As we will see in Theorem~\ref{thm: 212}, those that avoid 122 are equinumerous with those that avoid 212, and similarly in Theorem~\ref{thm: 121} for those that avoid 112 and those that avoid 121. In both cases, we provide a recurrence. 

\begin{theorem} \label{thm: 212}
    Let $\sigma\in\{212,221,122\}$ and, for $1\le k\le n$, let $p_{n,k}$ denote the number of $\{111,\sigma\}$-avoiding parking functions of size $n$ whose largest entry is $k$, so that $p_n(111,\sigma)=\sum_{k=1}^n p_{n,k}$. Then $p_{1,1}=p_{2,1}=1$, $p_{2,2}=2$, and for $n\ge3$,
    \[
    p_{n,k} =
    \begin{cases}
    0 & k=1,  \\
    p_{n,k-1} + n\, p_{n-1, k-1} + (n-1)\, p_{n-2, k-1} & 1 < k < n,\\
    n\, p_{n-1}(111,\sigma) & k = n.
    \end{cases}
    \]
    In particular $p_n(111,212)=p_n(111,221)=p_n(111,122)$.
\end{theorem}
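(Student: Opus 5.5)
The plan is to decompose according to the largest entry $k$ and how it appears, first for $\sigma=212$. For $\sigma=122$ we then either redo the decomposition or build a max-preserving bijection with the $212$ case. For $\sigma=221$ we use reversal, which sends $122$-avoiders to $221$-avoiders and preserves both the maximum and the parking condition. The initial values $p_{1,1}=p_{2,1}=1$, $p_{2,2}=2$ are immediate. If $n\ge3$ and $k=1$, all entries equal $1$, which contains $111$, so $p_{n,1}=0$.

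\textbf{The case $k=n$.} A parking function with two copies of $n$ has sorted sequence with $q_{n-1}=n>n-1$, so $n$ occurs exactly once. Deleting it leaves an arbitrary $\{111,\sigma\}$-avoiding parking function of size $n-1$ (any maximum). Conversely, inserting $n$ into any of the $n$ gaps of such a word gives a parking function. It cannot create $111$. It cannot create $\sigma$ either, because in each $\sigma\in\{212,221,122\}$ the letter playing the role of $2$ is repeated and the role of $1$ is strictly smaller. Hence $p_{n,n}=n\,p_{n-1}(111,\sigma)$.

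\textbf{The case $1<k<n$, split three ways.}
\begin{enumerate}[(i)]
\item \emph{The value $k-1$ is absent.} Lowering every copy of $k$ to $k-1$ is a bijection onto the words counted by $p_{n,k-1}$, since relative order and multiplicities are preserved. For the inverse, raising the $m\le2$ copies of the maximum $k-1$ to $k$ preserves the parking condition because $n-m\ge n-2\ge k-1$.
\item \emph{The value $k-1$ is present and $k$ occurs once.} Deleting $k$ gives a word counted by $p_{n-1,k-1}$. As in the case $k=n$, all $n$ reinsertion positions are valid, giving $n\,p_{n-1,k-1}$.
\item \emph{The value $k-1$ is present and $k$ occurs twice.} Deleting both copies gives a word counted by $p_{n-2,k-1}$; the parking condition holds in both directions since $k\le n-1$. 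For $\sigma=212$, two copies of the maximum may be reinserted exactly when no smaller entry lies between them, that is, when they are adjacent. This gives $n-1$ positions, hence $(n-1)p_{n-2,k-1}$.
\end{enumerate}
Summing the three cases gives the recurrence for $\sigma=212$.

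\textbf{The main obstacle: $\sigma=122$.} Here (iii) breaks down: avoiding $122$ forces the two copies of $k$ to sit at the very start, giving $1$ placement rather than $n-1$. So a different decomposition is needed. I would first try to construct a bijection between $\{111,212\}$- and $\{111,122\}$-avoiders preserving the maximum. A natural approach is to process values from largest to smallest. In a $212$-avoider each doubled value forms a block with no smaller entry between its copies. In a $122$-avoider each doubled value $v$ has only entries larger than $v$ before its first copy. The bijection would move the first copy of each doubled value leftward past larger entries, and one must check that this is invertible and preserves the parking condition, which depends only on the multiset of entries and is therefore automatic. If that fails, I would instead decompose $122$-avoiders by the first entry or by the value $1$. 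The goal would be to find a statistic whose removal yields the three terms $p_{n,k-1}$, $n\,p_{n-1,k-1}$, $(n-1)\,p_{n-2,k-1}$, for instance by choosing the $n-1$ positions for the copy of a doubled small value relative to the others.
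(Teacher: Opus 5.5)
\textbf{There is a genuine gap: the case $\sigma=122$, and with it $\sigma=221$ and the Wilf-equivalence, is not proved.} Your handling of $k=1$, $k=n$, and the three-way split (i)--(iii) for $\sigma=212$ is correct and is essentially the paper's argument. Reversal also correctly reduces $221$ to $122$. For $122$, however, you offer only an unverified bijection sketch plus a vague fallback. The reason you give for needing them is false.

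Avoiding $122$ does not force both copies of the maximum $k$ to the front. It forces only the \emph{first} copy to be $\pi_1$: any entry before the first $k$ is smaller than $k$, so it would form $1\,2\,2$ with the two $k$'s. An entry $a<k$ between the two copies only produces $k\,a\,k$, which is a $212$-type subsequence and harmless. For instance, with $n=3$, $k=2$ both $212$ and $221$ avoid $122$, giving $2=(n-1)p_{1,1}$ words, not $1$. A single copy of $k$ cannot take part in a $122$ occurrence without the other copy: as the maximum it cannot play the role of $1$, and the role of $2$ is doubled.

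So in case (iii) the $122$-avoiders are exactly the words obtained from one counted by $p_{n-2,k-1}$ as follows. Put one $k$ in front and the other in any of the remaining $n-1$ positions. The parking check is the same as in your $212$ argument. This gives $(n-1)p_{n-2,k-1}$ again. Your steps (i), (ii) and the case $k=n$ never used which $\sigma$ was chosen, so the recursion for $122$, and then for $221$ by reversal, follows verbatim. This yields $p_n(111,212)=p_n(111,122)=p_n(111,221)$. It is exactly the paper's route: the paper notes that in the $122$/$221$ cases one copy of $k$ must sit at the beginning or end, and the count is unchanged.
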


\begin{proof}
    We will focus on the case where $\sigma = 212$ since the other cases are similar. If $k = 1$ and $n \geq 3$, there are no such parking functions. If $k = n$, we can take any such parking function of size $n-1$ and add $n$ into any of the $n$ positions to obtain a parking function that still avoids $111$ and $212$, but now has $n$ as its largest element. This shows $p_{n,n} = n p_{n-1}(111,\sigma)$. We now focus on the less trivial cases when $1 < k < n$.

    We want to find the number of $\{111,212\}$-avoiding parking functions of size $n$ whose largest element is $k$. First, we can take any $\{111, 212\}$-avoiding parking function $\pi$ of length $n$ with $k-1$ as its largest element and we can replace all the $(k-1)$'s with $k$'s. This is still a parking function. Since $\pi$ avoids $111$, at most two entries of $\pi$ equal $k-1$, so at least $n-2\ge k-1$ entries are smaller than $k-1$. As a result, $|\{i:\pi_i\le m\}|\ge m$ still holds for $m=k-1$, and the counts for every other $m$ are unchanged. Since the relabelling is order preserving, the new word still avoids $111$ and $212$. This accounts for all the $\{111,212\}$-avoiding parking functions of length $n$ with $k$ as the largest element where no $k-1$ is present. There are $p_{n,k-1}$ such parking functions.

    Now we need to account for the $\{111,212\}$-avoiding parking functions that do contain a $k-1$. We first consider those with a single copy of $k$. These parking functions come from inserting a $k$ into any position of a $\{111,212\}$-avoiding parking function of size $n-1$ with largest element $k-1$. There are therefore $n p_{n-1,k-1}$ such parking functions. Next, we consider the parking functions with two copies of $k$; this is the only other case because of the $111$-avoidance condition. To avoid $212$, since $k$ is the maximal element, we must have the two copies of $k$ occur consecutively. We can therefore construct these parking functions by adding $kk$ into any position of a $\{111,212\}$-avoiding parking function of size $n-2$ whose largest element is $k-1$. There are $(n-1)p_{n-2,k-1}$ such parking functions.

    This accounts for all the possibilities and establishes the desired recursion when $1 < k < n$. The proof is almost identical when $212$ is replaced by either $122$ or $221$. The only difference is in the final step, instead of the two $k$ terms occurring consecutively, one of the $k$'s must occur at the beginning or the end of the parking function. The recursion remains the same.
\end{proof}

\begin{theorem} \label{thm: 121}
    If we let $p_n$ denote the number of $\{111, \sigma\}$-avoiding parking functions, where $\sigma \in \{121,211,112\}$, then $p_n = \sum_{k=1}^n p_{n,k}$ where $p_{1,1} = 1$, $p_{0} = 1$ and $p_{m,0} = 0$ for all $m$ and for $n \ge 2$:
    \[
    p_{n,k} =
    \begin{cases}
    (n-1) p_{n-2} & k = 1\\
    np_{n-1,k-1} +
    p_{n,k-1}  - np_{n-1,k-2} - (n-1) p_{n-2,k-2} & 1 < k <n \\
    n! & k=n.
    \end{cases}
    \]
\end{theorem}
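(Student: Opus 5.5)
The plan rests on a guess about what the index $k$ means, because the statement does not say. The boundary cases point to a natural candidate. For $k=n$ the count is $n!$, which is exactly the permutations, i.e.\ parking functions with $|\{i:\pi_i\le m\}|=m$ for every $m$. For $k=1$ the count $(n-1)p_{n-2}$ matches the parking functions with $|\{i:\pi_i\le m\}|\ge m+1$ for all $m<n$, the so-called prime parking functions. So I will interpret $p_{n,k}$ as the number of $\{111,\sigma\}$-avoiding parking functions with exactly $k$ prime components, namely
\[
k=1+\#\{\,m<n : |\{i:\pi_i\le m\}|=m\,\}.
\]
Before building on this I will check it against small data. For $n=3$ and $\sigma=121$, the classes are $\{112,211\}$ for $k=1$, $\{113,311,122,212,221\}$ for $k=2$, and the six permutations for $k=3$. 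These give $2,5,6$, which agrees with the recurrence; the total $13$ equals $16$ minus $111$, $121$ and $131$. I will carry out the proof for $\sigma=121$ and then transfer it to $211$ and $112$. The case $112$ follows from $211$ by reversal, as noted at the start of this section. For $211$ the structural facts used below are re-proved with ``the two copies of the minimum come last'' in place of ``the two copies are adjacent'', in the same way Theorem~\ref{thm: 212} handled $122$ and $221$.

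\emph{Step 1 (structure).} Any parking function splits into value-intervals $[a_1,b_1],\dots,[a_k,b_k]$, one per prime component. The entries of each component, standardized, form a prime parking function of size $b_j-a_j+1$. A prime component of size at least $2$ must contain its minimum value $a_j$ at least twice. By $111$-avoidance it contains it exactly twice, and by $121$-avoidance no entry with a larger value lies between those two copies. Entries from lower components may sit between them, and this interleaving is the delicate point.

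\emph{Step 2 (boundary rows).} For $k=n$ every component is a singleton, so $\pi$ is a permutation, and every permutation avoids $111$ and $121$; this gives $n!$. For $k=1$, the value $1$ appears twice, consecutively, since it is the global minimum and nothing smaller exists. Deleting the pair $11$ and subtracting $1$ from every remaining entry yields an arbitrary $\{111,121\}$-avoiding parking function of size $n-2$. The pair can be reinserted into any of $n-1$ gaps, which gives $(n-1)p_{n-2}$.

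\emph{Step 3 (recurrence for $1<k<n$).} I will split according to the top component, the one containing the value $n$. If it is the singleton $\{n\}$, deleting $n$ is a bijection onto the objects counted by $p_{n-1,k-1}$, and $n$ can be inserted in $n$ positions; this gives $n\,p_{n-1,k-1}$. If the top component has size $2$, it must be the block $(n-1)(n-1)$ in consecutive positions. I then need to show that the remaining objects, those with $k$ components and a top component of size at least $2$, are equinumerous with
\[
p_{n,k-1}-n\,p_{n-1,k-2}-(n-1)\,p_{n-2,k-2}.
\]
By the two counts just described, this difference is exactly the number of objects with $k-1$ components whose top component has size at least $3$. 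The target is therefore a bijection between:
\begin{itemize}
\item objects with $k$ components whose top component has size at least $2$, and
\item objects with $k-1$ components whose top component has size at least $3$.
\end{itemize}
My first attempt is to merge the top two components. I would lower one copy of the minimum value of the top component by one, in the style of the relabelling in Theorem~\ref{thm: 212}, so that the last tight index disappears. Inversely, given a top component of size at least $3$, I would split it at the appropriate value to recover a tight index.

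\emph{Main obstacle.} Constructing this bijection so that it preserves $121$-avoidance and $111$-avoidance, and is genuinely invertible, is the main obstacle. The adjacency of the two copies of each component's minimum interacts with entries of lower components that can sit between them. If the direct bijection resists, I would fall back on verifying the equivalent cumulative identity. Summing the recurrence over $k$ turns it into a statement about $\sum_{j\le k}p_{n,j}$, and I would prove that statement by a generating-function computation using the component decomposition of Step 1.
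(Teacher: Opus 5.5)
Your Step 3 fails, and the trouble starts before the step you flag as the main obstacle. Peeling off the \emph{top} component is the wrong end for these patterns, because a new largest entry cannot be inserted freely. Putting $n$ between the two copies of a repeated value $a$ creates $a\,n\,a$, an occurrence of $121$; for $211$ and $112$, putting it before, resp.\ after, the pair does the same. So objects with $k$ components and top component $\{n\}$ are not counted by $n\,p_{n-1,k-1}$. For $n=3$, $k=2$ there are only two of them, $113$ and $311$, since $131$ contains $121$. Likewise, a top component of size $2$ is a pair of copies of $n-1$ that need not be adjacent: $212$ avoids $111$ and $121$, which is exactly the interleaving you noted in Step 1. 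As a result, the bijection you set out to build cannot exist. For $n=3$, $k=2$, the objects with two components and top component of size at least $2$ are $122,212,221$, while the objects with one component and top component of size at least $3$ are only $112,211$. So the case $1<k<n$ is never proved, and the generating-function fallback is too vague to close the gap.

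The missing idea is to peel off the \emph{minimum}, where the patterns impose nothing beyond what you already used in Step 2. A single new $1$ can go into any of the $n$ gaps. Two $1$'s form an adjacent pair (or, for $211$/$112$, the first/last copy is forced to be the first/last entry), giving $n-1$ placements. The paper does exactly this, but with a different meaning of $k$: the largest value such that $1,\dots,k$ all occur. It splits by the multiplicity of $1$, gets $n\,p_{n-1,k-1}$ for a single $1$, and handles two $1$'s by the relabelling $k\mapsto k+1$ together with deletion of the adjacent pair $11$.

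Your component statistic also works if you replace ``top'' by ``bottom'' throughout. A bottom component $\{1\}$ contributes $n\,p_{n-1,k-1}$. If the bottom component has size $s\ge 2$, delete its two $1$'s and lower the remaining entries by $1$ if they are below $s$ and by $2$ if they are above $s$. This gives an arbitrary $\tau$ of size $n-2$ in which the rest of the bottom block has become the first $c(\tau)-k+1\ge 0$ components. Hence
\[
p_{n,k}=n\,p_{n-1,k-1}+(n-1)\sum_{c\ge k-1}p_{n-2,c},
\]
and differencing in $k$ yields the stated recurrence. Equivalently, your target bijection does hold with ``bottom'' in place of ``top'': lower every entry of the second component by one, merging it into the bottom component.
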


\begin{proof}
    We will consider the case where $\sigma = 121$, the others are similar. Here, $P_{n,k}$ denotes the $\{111,121\}$-avoiding parking functions of size $n$ whose set of used letters is $S$, where $\{1,\dots,k\} \subset S$ and $k+1 \not\in S$. We let $p_{n,k} = |P_{n,k}|$. When $k = n$, we get permutations, so there are $n!$ options. When $k = 1$, we must have two copies of $1$ because the parking function cannot contain $2$. These are obtained by taking any $\{111,121\}$-avoiding parking function of length $n-2$, shifting all its entries up by two and inserting $11$ anywhere. The $11$ must be consecutive to avoid $121$. There are $(n-1)p_{n-2}$ such parking functions. We now consider the case where $1 < k < n$.

    First, we consider the parking functions with a single copy of $1$. We can obtain these by taking any parking function in $P_{n-1,k-1}$, shifting all of the entries up by $1$ and inserting a $1$ into any position. There are $n p_{n-1,k-1}$ such functions. These account for all the desired parking functions with a single copy of $1$.

    We now claim that there are $p_{n,k-1} - np_{n-1,k-2} - (n-1) p_{n-2,k-2}$ parking functions in $P_{n,k}$ with two copies of $1$. To see this, let $X$ denote the set of $\pi \in P_{n,k}$ with two copies of $1$ and let $Y$ denote the set of $\pi \in P_{n,k-1}$ with two copies of $1$. By the previous paragraph we have $|Y| = p_{n,k-1} - n p_{n-1,k-2}$. Now, we split $Y$ into two disjoint subsets, $Y = Y' \sqcup Y''$ where $Y'$ consists of those parking functions that contain $k+1$ and $Y''$ consists of those that do not. We claim $|X| = |Y'|$ and $|Y''| = (n-1) p_{n-2,k-2}$ which gives
    \[
    |X| = p_{n,k-1} - n p_{n-1,k-2} - (n-1) p_{n-2,k-2}
    \]
    as desired. 

    For the first claim, send $\pi \in X$ to the word $\pi'$ obtained by replacing every $k$ by $k+1$, we can do this because $k+1 \not\in S$. Since $k+1 \notin S$, this relabelling is order preserving, so $\pi'$ still avoids $111$ and $121$. It is also still a parking function. Indeed, the number of entries at most $m$ is unchanged for every $m \neq k$, while $|\{j : \pi'_j \le k\}| = |\{j : \pi_j \le k-1\}| \ge k$, because $\pi$ contains $1$ twice and each of $2,\dots,k-1$ at least once. But by construction $\pi'$ contains $\{1,\dots,k-1\}$ and $k+1$ but not $k$, so $\pi' \in Y'$. The inverse map replaces every $k+1$ by $k$, this shows $|X| = |Y'|$.

    Now to prove $|Y''| = (n-1) p_{n-2,k-2}$, let $\pi \in Y''$ so that $\pi$ contains $1$ twice and neither $k$ nor $k+1$. To avoid $121$, the two $1$s are adjacent, so we can delete them and lower every entry in $\{2,\dots, k-1\}$ by one and every entry greater than or equal to $k+2$ by two (possible since $\pi$ does not contain $k$ or $k+1$). The resulting word, $\tau$, is a parking function. Indeed, for $m \le k-2$, $|\{j:\tau_j\le m\}| = |\{j:\pi_j\le m+1\}| - 2 \ge m$, since $\pi$ contains $1$ twice and each of $2,\dots,m+1$ at least once and for $m \ge k-1$, $|\{j:\tau_j\le m\}| = |\{j:\pi_j\le m+2\}| - 2 \ge m$. Since $\tau$ contains $\{1,\dots,k-2\}$ but does not contain $k-1$, we conclude $\tau \in P_{n-2,k-2}$. There were $n-1$ places the repeated $1$s could appear in the original parking function. 
    
    Conversely, given $\pi' \in P_{n-2,k-2}$ we can add $11$ in any of the $n-1$ positions of $\pi'$, shift all the entries $\{1,\dots,k-2\}$ up by one and all the entries greater than or equal to $k$ up by two and we reverse this process. This shows $|Y''| = (n-1) p_{n-2,k-2}$ and establishes the desired recursion. When $k = 2$ both $Y''$ and $P_{n-2,0}$ are empty, and the formula reduces to $p_{n,2} = n\, p_{n-1,1} + p_{n,1}$.

    The cases where $\sigma = 211$ or $112$ are essentially identical except in the final step. Wherever we used that two copies of $1$ must be adjacent, we use instead that the first (resp. last) copy of $1$ must be the first (resp. last) entry of the word, while the other copy is unconstrained. In each case there are again $n-1$ ways to place the pair.
\end{proof}

\section{Avoiding $111$ and a pair of patterns of length 3}\label{sec:111-double}

For this section, we recall that $a_{n,k}$ denotes the number of Dyck paths of semi-length $n$ with at most $2$ consecutive $U$ steps and exactly $k$ occurrences of $UU$.

\begin{theorem} \label{thm: 112-121}
    For $n \geq 1$, $p_n(111,112,211) = p_n(111,112,121) = p_n(111,211,121) = p_n$ where $p_n=\sum_k p_{n,k}$, with $p_{1,1} = 1$ and for $n \ge 2$,
    \[
    p_{n,k} = 
    \begin{cases}
         p_{n-2} & k = 1,\\
        np_{n-1,k-1} + \sum_{j=k-1}^{n-2} p_{n-2,j} & 1 < k < n,\\
        n! & k = n.
    \end{cases}
    \]
    In particular, all three sets are Wilf-equivalent.
\end{theorem}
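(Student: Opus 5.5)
We mirror the proof of Theorem~\ref{thm: 121}, working with the pair $\{112,121\}$ and noting where the other two pairs differ. As there, let $P_{n,k}$ be the set of $\{111,112,121\}$-avoiding parking functions of size $n$ whose set of letters contains $\{1,\dots,k\}$ but not $k+1$, and set $p_{n,k}=|P_{n,k}|$. Then $p_n=\sum_k p_{n,k}$.

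\emph{Structural fact.} Suppose a letter $a$ appears twice. Avoiding $121$ and $112$ forces every letter larger than $a$ to appear before both copies of $a$. For the pair $\{112,211\}$, every larger letter must lie strictly between the two copies. For $\{211,121\}$, every larger letter must come after both copies. So in each case a repeated minimum pins down the positions of all larger letters relative to the pair.

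\emph{Boundary cases.} For $k=n$ we have permutations, giving $n!$. For $k=1$, the letter $1$ must appear twice and $2$ is absent. Applying the structural fact to $a=1$, the two $1$'s are adjacent and are placed in the single forced position (at the end, for $\{112,121\}$). The rest is an arbitrary avoider of size $n-2$, shifted up by $2$. The parking condition holds because the two $1$'s cover spots $1,2$. This gives $p_{n-2}$.

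\emph{Case $1<k<n$, one copy of $1$.} Delete the $1$ and lower all remaining entries by one. This is a bijection with $P_{n-1,k-1}$ times $n$ positions, exactly as in Theorem~\ref{thm: 121}. The only check is that a single $1$ cannot create a $112$ or $121$, which holds because the $1$ is a unique minimum.

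\emph{Case $1<k<n$, two copies of $1$.} This is the main obstacle: we must show these are counted by $\sum_{j=k-1}^{n-2}p_{n-2,j}$. The plan is to delete the adjacent pair $11$, which sits at its forced position, and lower entries to obtain a word $\tau$ of size $n-2$. The key is how to lower entries. The original word contains $1,1,2,\dots,k$ and omits $k+1$. If we lower all entries by one, the missing value $k+1$ becomes a gap at $k$. Since we removed two small entries, we can afford exactly one gap. So the map should be: remove the $1$'s, lower $\{2,\dots,k\}$ by one, and lower entries $\ge k+2$ by two. This produces a parking function of size $n-2$ containing $1,\dots,k-1$, so $\tau\in P_{n-2,j}$ for some $j\ge k-1$.

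Conversely, given $\tau\in P_{n-2,j}$ with $j\ge k-1$, we recover the original word uniquely. Choose the cut value $k-1\le j$, raise entries $\le k-1$ by one and entries $\ge k$ by two, then insert $11$ at the forced position. The result misses exactly $k+1$, and the parking inequalities are verified as in the $Y''$ argument of Theorem~\ref{thm: 121}: we gain $2$ on prefix counts while shifting thresholds by at most $2$. Pattern avoidance is preserved because the forced placement of the $11$ relative to all larger letters is automatic.

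The delicate point is confirming that the inverse never produces a word with $k+1$ present, and that every $j\ge k-1$ (not only $j=k-1$) is allowed. This is where the sum arises, and I expect it needs careful checking of the case $j=k-1$ versus $j\ge k$.

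\emph{The other two pairs.} The same argument runs verbatim, with the forced position of the $11$ block changed (start, or split around everything). In every case there is exactly one placement, so the counts agree and the three classes are Wilf-equivalent.
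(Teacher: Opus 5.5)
Your proposal is correct and is essentially the paper's proof. The paper also splits $P_{n,k}$ by one versus two copies of $1$. In the two-copy case it removes the $1$'s and shifts $\{2,\dots,k\}$ down by one and values $\ge k+2$ down by two, which bijects onto $\bigcup_{j=k-1}^{n-2}P_{n-2,j}$. The only cosmetic difference is that the paper takes $\{112,211\}$, with the $1$'s at the two ends, as its base case rather than your $\{112,121\}$.

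The ``delicate point'' you flag is already settled by your own construction. The inverse sends values $\le k-1$ into $\{2,\dots,k\}$ and values $\ge k$ to values $\ge k+2$, so $k+1$ is never produced and every $j\ge k-1$ is allowed.
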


\begin{proof}
    Let $P_{n,k}$ denote the set of $\{111, 112,211\}$-avoiding parking functions of size $n$ whose set of used letters is $S$, where $\{1,\dots,k\} \subset S$ and $k+1 \not\in S$. We let $p_{n,k} = |P_{n,k}|$. When $k = 1$, we cannot have a single copy of $1$ and skip $2$, so the only case to consider is when we have two copies of $1$. In this case, the $1$'s must appear at the beginning and end of the parking function. We can then take any parking function of length $n-2$ that avoids $111, 112,$ and $211$, shift all the entries up by two and place it between the two $1$'s. This shows $p_{n,1} = p_{n-2}$.

    When $k = n$, every element of $P_{n,n}$ is a permutation, so $p_{n,n} = n!$. The argument below also covers this case because the second sum will be empty, but we separate to make the argument more straightforward. Now if $1 < k < n$, we have two cases to consider. Either, the parking function contains a single copy of $1$ or two copies of $1$. In the first case, there are $np_{n-1,k-1}$ options because we can take any parking function in $P_{n-1,k-1}$, shift all the entries up by one and add a $1$ anywhere.

    In the second case, if $\pi \in P_{n,k}$ has two copies of $1$ they must appear at the beginning and end of $\pi$ to avoid $112$ and $211$. For the entries between the two $1$'s we can take any parking function in $P_{n-2,j}$ for $k-1 \leq j \leq n-2$, and shift its entries less than $k$ up by one and its entries greater than or equal to $k$ up by two. The $j$ keeps track of where the next jump occurs, making this an easily reversible process.

    For the other cases $\{111,112,121\}$ and $\{111,211,121\}$, the only change is that the two copies of $1$ must be adjacent and at the end (resp.\ the beginning) of $\pi$; again there is exactly one way to place them.
\end{proof}

\begin{theorem} \label{thm: 122-221}
    For $n\geq 1$, $p_n(111,122,221)=p_n(111,212,221) = p_n(111,212,122) =p_n$ where $p_n=\sum_{k}p_{n,k}$ with $p_0=p_{0,0}=1$, and
    \[
    p_{n,k} = \begin{cases}
         p_{n,k-1} + n p_{n-1,k-1} + p_{n-2,k-1} & \text{if} \quad \lceil \tfrac{n}{2}  \rceil \le k < n\\
          n p_{n-1} & \text{if}\quad k=n \\
          0 & \text{otherwise.}
    \end{cases}\]
\end{theorem}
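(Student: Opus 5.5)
The plan is to follow the proof of Theorem~\ref{thm: 212} closely, again letting $p_{n,k}$ count the parking functions of size $n$ avoiding $111$ and the given pair whose largest entry is $k$. I will work out the pair $\{122,221\}$ in detail and then explain the changes needed for the other two pairs. Throughout I condition on how many copies of the maximum $k$ appear and whether $k-1$ appears.

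First I dispose of the boundary cases. Since $\pi$ avoids $111$, each of the values $1,\dots,k$ occurs at most twice, so $n\le 2k$. Hence $p_{n,k}=0$ whenever $k<\lceil n/2\rceil$. For $k=n$ the entry $n$ appears exactly once, because a parking function of size $n$ has at least $n-1$ entries at most $n-1$. A single copy of the maximum can never play a role in any of the patterns $122,221,212$: it cannot be the repeated larger letter, and it cannot be the smaller letter. So inserting $n$ into any of the $n$ positions of a valid word of size $n-1$ is a bijection, which gives $p_{n,n}=np_{n-1}$.

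For $\lceil n/2\rceil\le k<n$ there are three disjoint cases.

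\emph{(i) No entry equals $k-1$.} Replacing every $k$ by $k-1$ is order preserving, so avoidance is kept. It also preserves the parking condition: at most two entries equal $k$, so at least $n-2\ge k-1$ entries are at most $k-1$, and the counts for all other $m$ are unchanged. This identifies the case with the words counted by $p_{n,k-1}$.

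\emph{(ii) The word contains $k-1$ and exactly one $k$.} Deleting the $k$ leaves a word counted by $p_{n-1,k-1}$. Conversely, by the remark above the single maximum creates no pattern, so it can be reinserted in $n$ places. This gives $np_{n-1,k-1}$ words.

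\emph{(iii) The word contains $k-1$ and two copies of $k$.} This is the key step. Suppose the copies of $k$ sit at positions $i<j$. Any entry before position $i$ is smaller than $k$, so together with the two $k$'s it forms a $122$. Any entry after position $j$ is smaller, so it forms a $221$. Hence $\pi=k\,\tau\,k$.

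I then check that $\tau$ is an arbitrary valid word with maximum $k-1$ and that no new patterns arise. Every pattern using one or both outer $k$'s has the shape $k\cdot\cdot$ or $\cdot\cdot k$ with the $k$ strictly largest, and these are of types $211,121,112,12,21$, none of which is forbidden. The parking condition for $\tau$ is immediate, since the counts for $m\le k-1$ are unchanged. So this case contributes exactly $p_{n-2,k-1}$ words, with a single placement rather than the $(n-1)$ placements of Theorem~\ref{thm: 212}. Summing the three cases gives the stated recursion.

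The other two pairs are handled the same way. For $\{212,221\}$, avoiding $212$ forces the two $k$'s to be adjacent, and avoiding $221$ then forces nothing to follow them, so $\pi=\tau kk$. For $\{212,122\}$ the same reasoning gives $\pi=kk\tau$. In each case the added block creates no forbidden pattern, so the counts coincide.

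The main obstacle, which I would write out carefully, is case (iii). There I must verify both that the placement of the pair is forced, and that attaching the pair to an arbitrary valid $\tau$ never creates a forbidden pattern that mixes the outer $k$'s with a repeated letter of $\tau$.
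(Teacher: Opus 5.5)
Your argument is correct, and its core matches the paper's. You split by the number of copies of the maximum $k$, observe that a single copy of $k$ can be inserted anywhere, and show that two copies are forced into the form $k\,\tau\,k$.

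The difference is in how the term $p_{n,k-1}$ arises. The paper deletes the copies of $k$ and lets the remaining word have any smaller maximum $m$, with gaps allowed. This gives the cumulative identity $p_{n,k}=n\sum_{m<k}p_{n-1,m}+\sum_{m<k}p_{n-2,m}$ for $k<n$. Subtracting the same identity for $p_{n,k-1}$ yields the recursion, and $k=n$ is the same identity with the second sum absent.

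You instead isolate the words in which $k-1$ is missing and match them with the words of maximum $k-1$ via the relabelling $k\leftrightarrow k-1$, as in Theorem~\ref{thm: 212}. This gives each term of the recursion a direct combinatorial meaning. The cost is the parking check $n-2\ge k-1$, which is what the inverse map $k-1\mapsto k$ needs; the forward map only lowers entries, so it trivially preserves the parking condition. You also handle $\{212,221\}$ and $\{212,122\}$ explicitly through the forms $\tau kk$ and $kk\tau$, which the paper's proof does not discuss.

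Two small things to fix in a write-up:
\begin{enumerate}
\item In case (iii), the subsequence $k\,x\,k$ is an occurrence of $212$, not $121$. This is harmless, since $212$ is not forbidden for this pair.
\item For $k=1$, i.e.\ the value $p_{2,1}$, the condition ``contains $k-1$'' is meaningless. Either check $p_{2,1}=1$ directly, or phrase cases (ii) and (iii) as ``the word left after deleting the copies of $k$ has maximum exactly $k-1$,'' with the empty word counted by $p_{0,0}=1$.
\end{enumerate}
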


\begin{proof}
    Let $p_{n,k}$ be the number of $\{111,122,221\}$-avoiding parking functions with maximal entry $k$.
    If we wish to form $p_{n,k}$, we can add $k$ to any $\{111,122,221\}$-avoiding parking function of length $n-1$ and smaller maximal entry than $k$. We can add this $k$ into any position without creating a $221$ or $122$ pattern. Or we could add two $k$'s to any $\{111,122,221\}$-avoiding parking function of length $n-2$ and smaller maximal entry than $k$. We can, however, only add these two copies of $k$ as the first and last entry, otherwise we would create a $122$ or $221$. This shows that for $k<n$, 
    \[
    p_{n,k} = n\sum_{m<k} p_{n-1,m} + \sum_{m<k} p_{n-2,m},
    \]
    and subtracting the corresponding expression for $p_{n,k-1}$ gives the stated recursion. When $k=n$ the second sum is absent, so $p_{n,n}=n\,p_{n-1}$. The lower bound on $k$ comes from the $111$ avoidance.
\end{proof}

\begin{theorem} \label{thm: 121-212}
    For $n \geq 1$, $p_n(111,121,212) = \sum_{k=0}^n (n-k)!\,a_{n,k}$.
\end{theorem}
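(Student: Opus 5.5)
The plan is to show that the three avoidance conditions together force every repeated value to appear as two adjacent copies. With that in hand, the count reduces to a weakly increasing parking function (counted by $a_{n,k}$) together with an arbitrary ordering of blocks, exactly as in the proof of Theorem~\ref{thm:Pell}.

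First, I will prove the structural claim: a word $\pi$ avoids $\{111,121,212\}$ if and only if each value occurs at most twice and the two copies of any repeated value are adjacent. Suppose $\pi$ is such a word and some value $a$ occurs twice, at positions $i<j$. Then no entry $\pi_r$ with $i<r<j$ can equal $a$, by $111$-avoidance. It cannot be larger than $a$, since then $\pi_i\pi_r\pi_j$ is a copy of $121$. It cannot be smaller than $a$, since then $\pi_i\pi_r\pi_j$ is a copy of $212$. Hence $j=i+1$. Conversely, suppose each value occurs at most twice and repeated values are adjacent. Any copy of $121$ or $212$ uses two equal entries with an entry strictly between them, which is impossible, and $111$ is excluded by the multiplicity bound.

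Next, I will count. Fix the multiset of entries of $\pi$; the parking condition depends only on this multiset. The multiset is encoded by its sorted arrangement, a weakly increasing parking function avoiding $111$. Let $k$ be the number of values occurring twice. By the Dyck path correspondence of Section~\ref{sec: limited-pref}, the weakly increasing $111$-avoiding parking functions of length $n$ with exactly $k$ doubled values are counted by $a_{n,k}$ (cf.\ Remark~\ref{rem: motzkin}).

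Given such a multiset, the admissible words are obtained by gluing each doubled value into a single block $aa$. This leaves $k$ blocks and $n-2k$ singletons, so $n-k$ pairwise distinct units in total. These units can be arranged in any of $(n-k)!$ orders, and distinct orders give distinct words. By the structural claim, every arrangement avoids all three patterns, and every avoiding word arises exactly once in this way. Summing over $k$ gives
\[
p_n(111,121,212)=\sum_{k\ge 0}(n-k)!\,a_{n,k}.
\]
The terms with $2k>n$ vanish because $a_{n,k}=\binom{n}{2k}C_k=0$ there, so the range $k\le n$ in the statement is harmless.

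The only step needing care is the structural claim, specifically checking that every possible intermediate entry between two equal values creates one of the three forbidden patterns. The rest is the same rearrangement count used in Theorem~\ref{thm:Pell}, with the factor $n!/\prod_i (i!)^{k_i}$ replaced by $(n-k)!$.
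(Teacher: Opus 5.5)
Your proposal is correct and follows the same approach as the paper: the three patterns force each repeated value to appear as two adjacent copies, the multiset of entries is counted by $a_{n,k}$, and the resulting $n-k$ units are arranged in $(n-k)!$ ways. Your version is more careful than the paper's, since you check both directions of the structural claim and explain why the terms with $2k>n$ vanish.
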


\begin{proof}
    Avoiding $121$ and $212$ forces all repeated elements to appear consecutively. If we take a weakly increasing parking function that avoids $111$ with $k$ repeated elements, we can permute all the $n-k$ distinct entries in $(n-k)!$ ways, giving $(n-k)!a_{n,k}$ possible parking functions. Summing over the possible number of repeated elements yields the stated result.
\end{proof}

\begin{theorem} \label{thm: 211-122}
    For $n \geq 1$, $p_n(111,211,122) = p_n(111,112,221) = \frac{(n+1)!}{2}$.
\end{theorem}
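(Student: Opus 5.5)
The plan is to pin down the structure of a $\{111,211,122\}$-avoiding parking function exactly, and then count directly. The second equality, $p_n(111,211,122)=p_n(111,112,221)$, follows from reversal. Reversing a word sends $211$ to $112$, $122$ to $221$, and fixes $111$. Reversal also preserves being a parking function, since that condition depends only on the multiset of entries. So it suffices to prove $p_n(111,211,122)=\frac{(n+1)!}{2}$.

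\textbf{Step 1: structure.} Suppose $\pi$ avoids $111,211,122$ and some value $b$ appears twice in $\pi$ (it cannot appear three times). Let $c$ be any entry before the first copy of $b$.
\begin{itemize}
\item If $c>b$, then $c\,b\,b$ is a copy of $211$.
\item If $c<b$, then $c\,b\,b$ is a copy of $122$.
\item $c=b$ is impossible, by the choice of the first copy.
\end{itemize}
Hence the first copy of every repeated value sits in position $1$. Consequently at most one value is repeated, and if one is, then $\pi_1$ equals that value.

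Conversely, suppose $\pi$ has all entries distinct, or has exactly one repeated value $a$ with $\pi_1=a$. Then $\pi$ avoids $111$. It also avoids $211$ and $122$, because any occurrence of either pattern uses two equal entries with a third entry before both of them, and no such entry exists.

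\textbf{Step 2: counting.}
\begin{itemize}
\item Parking functions with all entries distinct are exactly the permutations, contributing $n!$.
\item Now take one doubled value $a$. The multiset of entries is $[n]$ with one value $m$ removed and $a$ doubled. The sorted condition $q_i\le i$ holds precisely when $a<m$, so there are $\binom{n}{2}$ admissible multisets.
\item For each such multiset, one copy of $a$ is forced into position $1$. The remaining $n-1$ entries are distinct and may be arranged in $(n-1)!$ ways.
\end{itemize}
The total is
\[
n!+\binom n2 (n-1)! = n!\Bigl(1+\tfrac{n-1}{2}\Bigr)=\tfrac{(n+1)!}{2}.
\]

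The only step needing care is the claim in Step 2 that the multiset condition is exactly $a<m$. If $a<m$, the sorted word agrees with the identity except that it is shifted down by one on the range between $a$ and $m$, so $q_i\le i$ holds throughout. If $a>m$, then only $m-1$ entries are at most $m$, which violates the parking condition. Everything else is routine.
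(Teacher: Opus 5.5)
Your proof is correct and follows essentially the same route as the paper: you show that at most one value is repeated and that it must occupy position $1$, then count $n!+\binom{n}{2}(n-1)!$, with the same $\binom{n}{2}$ choice of multisets (repeated value smaller than the missing value). Your single observation, that any entry before the first copy of a repeated value forms a $211$ or a $122$, gives both structural facts at once and also proves the paper's unjustified claim that the repeated entry comes first. The paper instead rules out two repeated values by checking the six arrangements of $\{a,a,b,b\}$.
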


\begin{proof}
    First, notice such a parking function can have at most one repeated entry. If two entries $a<b$ repeat, this means the parking function contains one of $aabb, abab, baba, baab, abba, bbaa$, all of which contain either a $211$ or $122$ pattern.

    The number of parking functions with no repeated entries is $n!$. If we are allowed to have an entry occur at most two times, the number of weakly increasing parking functions with one entry repeating two times is $\binom{n}{2}$. Indeed, select two entries from $n$, let the repeated entry be the minimal entry we select, remove the other entry we select and keep the remaining $n-2$ entries. 
    
    After selecting the entries, we must arrange them. The initial entry of the parking function must be the repeated entry. We can then permute the remaining entries in any way. This means there are $\binom{n}{2} (n-1)!$ such parking functions with a repeat. In total, we find $p_n(111,211,122) = n! + \binom{n}{2} (n-1)! = \frac{(n+1)!}{2}$.
    \end{proof}

\begin{theorem} \label{thm: 112-122}
    For $n\geq 1$, $p_n=p_n(111,112,122)=p_n(111,112,212)=p_n(111,121,122)$ where $p_n=\sum_{k,\ell} p_{n,k,\ell}$ with 
    \[
    p_{n,k,\ell}=
    \begin{cases}
        (\ell-1)\sum_{m<k}p_{n-1,m,\ell-1} + \sum_{j\geq \ell-1, m<k} p_{n-2,m,j} & 2 \le k < n\\
        (\ell-1)\sum_{m<n}p_{n-1,m,\ell-1} & k = n.
    \end{cases}
    \]
    When $k = 1$, $p_{1,1,2} = 1$, $p_{2,1,2} = 1$, and for all other values of $n$ and $\ell$, $p_{n,1,\ell} = 0$.
\end{theorem}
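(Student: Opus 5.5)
We would prove the statement for $\{111,112,122\}$ and then explain the changes for the other two sets. The recursion is organized by the largest entry $k$, as in Theorems~\ref{thm: 212} and~\ref{thm: 122-221}. The coefficient $(\ell-1)$ and the condition $j\ge \ell-1$ suggest that $\ell$ records how many places are still available for a new maximum. So, for a $\{111,112,122\}$-avoiding parking function $\pi$ of size $n$, we would let $\ell$ be the index of the first entry of $\pi$ that is the second copy of some value, and set $\ell=n+1$ if $\pi$ has no repeated entry. We then define $p_{n,k,\ell}$ as the number of such $\pi$ with largest entry $k$ and this statistic $\ell$. The base cases fit this reading: $1$ has $\ell=2$, $11$ has $\ell=2$, and for $n\ge3$ a parking function with maximum $1$ would contain $111$.

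\textbf{Case of a single copy of the maximum $k$.} Deleting $k$ leaves $\tau$, a $\{111,112,122\}$-avoiding parking function of size $n-1$ with maximum $m<k$. The parking condition is checked as in the proof of Theorem~\ref{thm: 212}. Since $k$ occurs once, it can only take part in a $112$, and it does so exactly when some repeated pair is completed before it. Hence $k$ must be inserted into one of the slots before the entry at index $\ell'$ of $\tau$, where $\ell'$ is the statistic of $\tau$. There are $\ell'$ such slots. Each of them shifts that completing entry to index $\ell'+1$, so the new statistic is $\ell=\ell'+1$. This gives the term $(\ell-1)\sum_{m<k}p_{n-1,m,\ell-1}$.

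\textbf{Case of two copies of $k$.} Any entry smaller than $k$ placed before both copies creates a $122$, so the first $k$ must be the first entry of $\pi$. The second copy of $k$ must come before any completed smaller pair, since otherwise we get a $112$. Consequently the second $k$ is itself the first completed repeat. If the remaining word $\tau$, of size $n-2$, has statistic $j$, then the second $k$ can sit at any index $2,\dots,j+1$ of $\pi$, and $\ell$ equals that index. Conversely, a given $\ell$ is attainable exactly when $j\ge \ell-1$. Summing over the maximum $m<k$ of $\tau$ gives the second sum. When $k=n$, $\pi$ is forced to be a permutation, because the entries $1,\dots,n-1$ must each appear. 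So the two-copy term vanishes, leaving the stated formula.

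\textbf{Main obstacle.} The step we expect to need the most care is checking that these insertions and deletions preserve the parking condition and all three avoidances in both directions. In particular we must verify that smaller entries never create new patterns with the $k$'s beyond the ones identified above, and that every pair $(\tau,\text{slot})$ gives a distinct $\pi$. For the Wilf-equivalences, we would apply the same scheme, keeping $\ell$ and adjusting how the parameter is defined for each set. For $\{111,112,212\}$, the two $k$'s must be adjacent and lie before the first completed pair. For $\{111,121,122\}$, we would place the two copies of the smallest value, or apply reversal or complement where these preserve parking functions. In each case we would check that the slot counts are again $\ell-1$ and $j\ge\ell-1$.
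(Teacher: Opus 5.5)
\textbf{The main case is fine.} Your treatment of $\{111,112,122\}$, and of $\{111,112,212\}$ with the adjacent pair, is correct and is the paper's argument. The paper defines $\ell$ as the number of positions where a new largest entry can be inserted, and for these two classes that number is exactly your ``index of the first second copy'' (or $n+1$ if there is no repeat). One justification needs fixing. For $k=n$ the word need not be a permutation: $131$ and $311$ are valid for $n=3$, and your own single-copy count produces them. The two-copy term vanishes simply because $n$ cannot occur twice in a parking function of size $n$.

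\textbf{The gap is the third class $\{111,121,122\}$, which your proposal does not handle.}
\begin{itemize}
\item Reversal sends it to $\{111,121,221\}$, which is not a class you have treated.
\item Complementation does not preserve parking functions at all; for example, it sends $11$ to $22$.
\item Decomposing by the smallest value does not produce a recursion indexed by the largest entry $k$.
\item Your statistic is the wrong one here. For $121$-avoidance, a new maximum fits at position $p$ exactly when $\pi_1\cdots\pi_{p-1}$ and $\pi_p\cdots\pi_n$ share no value, and these positions need not be consecutive. For $3311$ they are $1,3,5$, while the first repeat is at index $2$.
\end{itemize}

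\textbf{The missing idea.} Take $\ell$ to be the number of such admissible positions, and verify the recursion directly.
\begin{itemize}
\item \emph{One copy of $k$.} Inserting $k$ at one of the $\ell-1$ admissible positions of $\tau$ keeps every old admissible position and adds exactly one new one. This is because the two slots flanking $k$ both correspond to the same admissible slot of $\tau$.
\item \emph{Two copies of $k$.} Avoiding $122$ forces $\pi_1=k$, and avoiding $121$ forces the second $k$ into an admissible position $a_i$ of $\tau$, where $a_1<\dots<a_j$.
\item \emph{Counting for two copies.} The admissible positions of $\pi$ are position $1$, the slot right after the second $k$, and $a_{i+1},\dots,a_j$; none lie between the two copies. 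That gives $j-i+2$ in total. So each $\tau$ with $j\ge \ell-1$ yields exactly one $\pi$ with $\ell$ admissible positions, namely the one with $i=j-\ell+2$, which is the second sum.
\end{itemize}
Here the two copies of $k$ need not be adjacent, and the second copy's location is not governed by a ``first repeat'' position. So this is a genuinely different verification, not a routine adjustment of your argument.
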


\begin{proof}
    We focus on the case of avoiding $112$ and $212$, the others are similar and we will remark on the similarities at the end of the proof. Let $p_{n,k,\ell}$ denote the number of $\{111,112,212\}$-avoiding parking functions of size $n$, largest element $k$, and $\ell$ places where one could insert a new largest element. We will sometimes refer to these as admissible positions. By definition we must have $2 \le \ell \le n+1$, since it is always possible to insert a new largest element at the start of the parking function or in the second position. Furthermore, in this case, the admissible positions must all appear consecutively at the beginning of the parking function. Indeed, reading from left to right, we can place a new largest element up until we have our first repeat occurrence of any element. As soon as we have a repeated entry, we cannot place a larger element after that spot. This means $\ell$ actually tracks the position of the first repeated element.
    
    If $k = 1$, $p_{n,1,\ell} = 0$ when $n > 2$ to avoid $111$, so we only have a few cases to consider: $p_{1,1,2} = 1$, $p_{2,1,2} = 1$ and $p_{2,1,3} = 0$. Now assume $k > 1$.

    First, we consider how many of these parking function have a single occurrence of $k$. One could build such a parking function by inserting a single $k$ into a parking function with maximal entry less than $k$. The parking functions we start with will therefore have size $n-1$ and largest element $m$ for some $m < k$. If we want the resulting parking function to have $\ell$ admissible positions, we need the parking function we start with to have $\ell-1$ admissible positions. Indeed, if we place $k$ into any of those $\ell-1$ places, we could then place a new largest element on either side of $k$ and in any of the remaining $\ell-2$ places we did not insert $k$. This means the parking functions we desire with a single occurrence of $k$ are enumerated by $(\ell-1) \sum_{m < k} p_{n-1,m,\ell-1}$.

    Next, for $k < n$ we consider how many of these parking function contain two copies of $k$. We need $k < n$ because $n$ cannot appear twice in any parking function; in the case $k = n$, we only get the first sum. To avoid $212$, the two occurrences of $k$ must be consecutive. We will similarly build such a parking function by inserting two consecutive copies of $k$ into a parking function with maximal entry less than $k$ and size $n-2$. Since $\ell$ records the position of the first repeated element and we are introducing a repetition, this means we must add our two copies of $k$ into positions $\ell-1$ and $\ell$. In order to be able to add a largest element in these positions, the first repetition in our starting parking function must occur in a position $\ge \ell-1$. In all, this means the number of desired parking functions with two occurrences of $k$ is precisely $\sum_{j \ge \ell-1, m < k} p_{n-2,m,j}$ as desired.

    Now if we instead considered $112,122$ avoidance, the only change in the argument is when we add two copies of the largest element. In this case instead of appearing consecutively, one must appear at the start of the parking function and we place the second entry in position $\ell$. 
    
    For $121,122$ avoidance the admissible positions look different, so we need to explain this case in more detail. A new largest entry creates a $121$ exactly when some value occurs both before and after it, and it cannot create a $122$. This means position $p$ is admissible if and only if $\pi_1\cdots\pi_{p-1}$ and $\pi_p\cdots\pi_n$ have no value in common. In particular positions $1$ and $n+1$ are always admissible, but the admissible positions don't have to be consecutive anymore. For example, for $\pi=3311$ they are $1$, $3$ and $5$. Also, now two copies of the largest entry don't have to be adjacent, e.g. $3123$ avoids $111$, $121$
    and $122$. Despite this, interestingly, the recursion is the same. If $k$ occurs once, it was inserted into an admissible position of a word $\tau$ with $\ell-1$ admissible positions, and since $k$ occurs only once this creates exactly one new admissible position, the one immediately after $k$, while preserving every other position. This gives the first term. If $k$ occurs twice, avoiding $122$ forces the first copy to be $\pi_1$, and avoiding $121$ forces the second copy into an admissible position of the remaining word $\tau$. Conversely, every such placement avoids $111$, $121$ and $122$. Now, suppose the admissible positions of $\tau$ are $a_1<\dots<a_j$ and the second $k$ is inserted at $a_i$. In $\pi$, position $1$ is admissible, no position between the two copies of $k$ is, the position immediately after the second $k$ is admissible because $a_i$ was, and each later position is admissible if and only if the corresponding position of $\tau$ was. So the number of admissible positions of $\pi$ is $1+1+(j-i)=j-i+2$, and for each $\ell\ge2$ every $\tau$ with $j\ge\ell-1$ gives exactly one $\pi$ with $\ell$ admissible positions, namely $i=j-\ell+2$. This is the second term.
\end{proof}

\section{Avoiding 111 and permutations of $\{1,1,2,2\}$} \label{sec:1122}

In this section, we consider those parking functions with $2$-limited preferences that avoid a subset of the permutations of $\{1,1,2,2\}$. Our results are summarized in Table~\ref{tab:111}.

\renewcommand{\arraystretch}{1.6}
\begin{table}[H]
    \centering
    \begin{tabular}{|c|c|}
    \hline
        Set of patterns $\Lambda$ & $p_n(111,\Lambda)$   \\ \hline \hline
        $\{1221,2112\}$ & \multirow{2}{*}{$\displaystyle\sum_{k=0}^{\lfloor n/2 \rfloor} \frac{n!}{(k+1)!} a_{n,k}$} \\ \cline{1-1}
        $\{1212, 2121\}$ &  \\ \hline
        $\{1122,2211\}$ & $\displaystyle\sum_{k=0}^{\lfloor n/2 \rfloor} (k!)^2 (n-2k)! \binom{n}{2k} a_{n,k}$ \\ \hline
        $\{1221, 2112,1212,2121\}$ & $\displaystyle\sum_{k=0}^{\lfloor n/2 \rfloor} k! (n-2k)! \binom{n}{2k} a_{n,k}$ \\ \hline
    \end{tabular}
    \caption{Here $a_{n,k}$ denotes the number of Dyck paths of length $2n$ with at most $2$ consecutive $U$'s (i.e. no occurrence of $UUU$) and exactly $k$ occurrences of $UU$, see \eqref{eq: ank} in Remark \ref{rem: motzkin}.}
    \label{tab:111}
\end{table}

All of these results follow from a general result linking these parking functions to the number of multipermutations of $\{1,1,2,2,\dots,n,n\}$ that avoid a subset of permutations of $\{1,1,2,2\}$.

\begin{theorem} \label{thm: repeatedAvoidance}
    If $\Lambda$ is a subset of the permutations of $\{1,1,2,2\}$, and $M_k(\Lambda)$ is the number of permutations of $[k] \cup [k]$ avoiding $\Lambda$, then 
    \[
    p_n(111,\Lambda) = \sum_{k=0}^{\lfloor n/2 \rfloor} M_k(\Lambda) (n-2k)! \binom{n}{2k} a_{n,k}
    \]
    where $a_{n,k}$ is the number of Dyck paths with no occurrence of $UUU$, and exactly $k$ occurrences of $UU$.
\end{theorem}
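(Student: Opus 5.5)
We prove the formula by sorting the $111$-avoiding parking functions according to their multiset of entries, exactly as in the proof of Theorem~\ref{thm:Pell}.

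\textbf{Step 1: the multisets.} Every $\pi$ avoiding $111$ has a sorted version, which is a weakly increasing parking function with at most two copies of each value. Let $k$ be the number of values occurring twice in $\pi$. The sorted versions with exactly $k$ doubled values are counted by $a_{n,k}$. This uses the Dyck path correspondence of Section~\ref{sec: limited-pref}: a doubled value corresponds to an occurrence of $UU$. Since the parking condition depends only on the multiset, it suffices to show the following. For a fixed multiset $\mu$ with $k$ doubled values $d_1<\dots<d_k$ and $n-2k$ single values, the number of arrangements of $\mu$ avoiding $\Lambda$ equals $M_k(\Lambda)\,(n-2k)!\binom{n}{2k}$. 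This count must not depend on $\mu$ itself.

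\textbf{Step 2: avoidance is decided by the doubled letters.} Every pattern in $\Lambda$ is a permutation of $\{1,1,2,2\}$. Any occurrence of such a pattern uses two values, each exactly twice. In a word with at most two copies of each letter, such an occurrence must use both copies of two doubled values $d_i<d_j$. Hence an arrangement $\pi$ of $\mu$ avoids $\Lambda$ if and only if its restriction to the doubled values avoids $\Lambda$. Standardize that restriction by sending $d_i\mapsto i$. This gives a permutation of $[k]\cup[k]$, and the standardization is order preserving, so containment of each pattern is unaffected. The restriction therefore avoids $\Lambda$ exactly when the standardized word is one of the $M_k(\Lambda)$ words counted in the statement.

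\textbf{Step 3: counting arrangements.} An arrangement of $\mu$ is determined by three independent choices. First, choose the set of $2k$ positions occupied by doubled values, in $\binom{n}{2k}$ ways. Second, choose the word on $\{d_1,d_1,\dots,d_k,d_k\}$ placed there, which by Step 2 can be chosen in $M_k(\Lambda)$ ways. Third, choose the arrangement of the $n-2k$ distinct single values in the remaining positions, in $(n-2k)!$ ways. These choices are independent and jointly bijective with arrangements of $\mu$. The number of avoiding arrangements is therefore $M_k(\Lambda)(n-2k)!\binom{n}{2k}$.

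\textbf{Step 4: summing.} Multiply this count by $a_{n,k}$ and sum over $0\le k\le\lfloor n/2\rfloor$ to obtain the theorem.

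The only step needing care is Step 2. We must check that no occurrence of a pattern from $\Lambda$ can involve a singly occurring value, and that standardization preserves avoidance. Both facts follow directly from the definition of containment with repeated letters given in Section~\ref{sec:background}.
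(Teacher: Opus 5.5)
Your proof is correct and follows the same decomposition as the paper. The paper also sorts by the weakly increasing multiset (counted by $a_{n,k}$), then counts the $M_k(\Lambda)$ avoiding words on the doubled values, the $(n-2k)!$ orderings of the single values, and the $\binom{n}{2k}$ interleavings. Your Step 2 is a more explicit version of the paper's one-line remark that an occurrence of a pattern from $\Lambda$ can only use both copies of two doubled values, and that standardization preserves avoidance.
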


\begin{proof}
    There are $a_{n,k}$ weakly increasing parking functions with $k$ repeated entries avoiding $111$. The number of ways to permute the repeated entries while still avoiding $\Lambda$ is $M_k(\Lambda)$. We can then permute the non-repeated entries in $(n-2k)!$ ways. We can insert the non-repeated entries anywhere among the repeated entries without creating a pattern in $\Lambda$ because $\Lambda$ consists of permutations of $[2] \cup [2]$. There are $\binom{n}{n-2k} = \binom{n}{2k}$ ways to insert the non-repeated entries. Summing over the number of possible repeated entries gives the result.
\end{proof}

We now consider a non-nesting or non-crossing condition on the cars. One can interpret the non-nesting condition as: no more than two cars can prefer the same spot and if two cars prefer the same spot, no two cars between them prefer the same spot.

\begin{theorem}
    Let $n\geq 1.$
    The number of parking functions avoiding $111$ and $\Lambda = \{1221, 2112\}$ or $\Lambda = \{1212, 2121\}$ is
\[
p_n(111,\Lambda) = \sum_{k=0}^{\lfloor n/2 \rfloor} \frac{n!}{(k+1)!} a_{n,k}.
\]
\end{theorem}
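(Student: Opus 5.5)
The plan is to apply Theorem~\ref{thm: repeatedAvoidance}, which reduces the count to the numbers $M_k(\Lambda)$ of permutations of $[k]\cup[k]$ avoiding $\Lambda$. The target formula should then follow from the identity
\[
M_k(\Lambda)\,(n-2k)!\binom{n}{2k} = \frac{n!}{(k+1)!}.
\]
Since $(n-2k)!\binom{n}{2k} = n!/(2k)!$, this identity is equivalent to
\[
M_k(\Lambda) = \frac{(2k)!}{(k+1)!} = k!\,C_k, \qquad C_k=\frac{1}{k+1}\binom{2k}{k}.
\]
So the whole proof amounts to showing that, for both choices of $\Lambda$, the number of $\Lambda$-avoiding permutations of $\{1,1,\dots,k,k\}$ is $k!\,C_k$.

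For $\Lambda=\{1212,2121\}$, a word on $[k]\cup[k]$ with each letter used twice corresponds to a perfect matching of the $2k$ positions: match the two occurrences of each letter. The occurrence of $1212$ or $2121$ among two letters $a\neq b$ says precisely that their arcs cross. So avoiding $\Lambda$ is the same as the underlying matching being noncrossing.

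Conversely, a word is determined by its matching together with an assignment of the $k$ distinct labels to the $k$ arcs, and every labeling is allowed. The crossing condition does not depend on which label is larger, because both $1212$ and $2121$ are forbidden. Hence $M_k = k!$ times the number of noncrossing perfect matchings of $2k$ points, which is $C_k$.

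The case $\Lambda=\{1221,2112\}$ is identical with nesting in place of crossing. The pair $\{1221,2112\}$ forbids exactly the nested configuration of two arcs, regardless of relative label order. So $M_k = k!$ times the number of nonnesting perfect matchings, which is also $C_k$. This is a standard fact; one can see it from the bijection sending a nonnesting matching to the Dyck path whose $U$ steps are left endpoints and $D$ steps are right endpoints, since nonnesting forces arcs to close in the order they open.

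The only step needing care is checking that the patterns correspond exactly to crossing or nesting, independent of labels. This holds because each $\Lambda$ is closed under swapping $1\leftrightarrow2$. Substituting $M_k=(2k)!/(k+1)!$ into Theorem~\ref{thm: repeatedAvoidance} and simplifying then gives the stated sum.
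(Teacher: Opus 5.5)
Your proposal is correct and follows the paper's own proof: apply Theorem~\ref{thm: repeatedAvoidance}, show $M_k(\Lambda)=k!\,C_k$ for both choices of $\Lambda$ (noncrossing, resp.\ nonnesting, perfect matchings on $2k$ points, with any of the $k!$ labelings allowed), and simplify using $k!\,C_k\,(n-2k)!\binom{n}{2k}=n!/(k+1)!$. Your remark that each $\Lambda$ is closed under $1\leftrightarrow 2$, so that the forbidden configurations are exactly crossings or nestings regardless of labels, spells out a detail the paper leaves implicit.
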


\begin{proof}
    We can use Theorem \ref{thm: repeatedAvoidance} here. In this case $\Lambda = \{1221, 2112\}$ or $\{1212,2121\}$ and so, in both cases, $M_k(\Lambda) = k!C_k$, since $C_k$ counts both the number of non-nesting  and the number of non-crossing matchings on $k$ elements. Simplifying
    \[
    \sum_{k=0}^{\lfloor n/2 \rfloor} k! C_k (n-2k)! \binom{n}{2k} a_{n,k}
    \]
    gives the desired result.
\end{proof}

\begin{theorem}
    The number of parking functions avoiding $111$ and $\Lambda = \{1122, 2211\}$ is
    \[
    p_n(111,\Lambda) = \sum_{k=0}^{\lfloor n/2 \rfloor} (k!)^2 (n-2k)! \binom{n}{2k} a_{n,k}.
    \]
\end{theorem}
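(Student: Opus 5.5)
Since $\Lambda=\{1122,2211\}$ consists of permutations of $\{1,1,2,2\}$, Theorem~\ref{thm: repeatedAvoidance} applies directly. It gives
\[
p_n(111,\Lambda)=\sum_{k=0}^{\lfloor n/2\rfloor} M_k(\Lambda)\,(n-2k)!\binom{n}{2k}a_{n,k},
\]
so the whole task is to show that $M_k(\{1122,2211\})=(k!)^2$. Here $M_k$ counts the words on $[k]\cup[k]$, each letter used exactly twice, that avoid both $1122$ and $2211$.

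The key structural claim is that such a word $w$ avoids $\{1122,2211\}$ exactly when every letter has one occurrence in the first half $w_1\cdots w_k$ and one in the second half $w_{k+1}\cdots w_{2k}$. Equivalently, no two distinct letters $a\ne b$ have both copies of $a$ strictly before both copies of $b$. The pattern $aabb$ with $a<b$ is $1122$, and with $a>b$ it is $2211$, so the two patterns together forbid exactly the configuration "both copies of $a$ precede both copies of $b$," whatever the relative order of $a$ and $b$.

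To prove the claim, I would record for each letter $a$ the interval $I_a=[f_a,s_a]$ from its first to its second occurrence. Avoidance means any two intervals intersect. The $2k$ endpoints are distinct, so pairwise intersecting intervals on a line share a common point (one-dimensional Helly). Thus there is a position $t$ with $f_a\le t< s_a$ for all $a$, meaning all $k$ first occurrences come before all $k$ second occurrences. Hence the first occurrences fill positions $1,\dots,k$, giving exactly the stated two-halves structure. Conversely, if every letter appears once in each half, both copies of $a$ can never precede both copies of $b$, since $a$'s second copy lies in the second half after $b$'s first copy. Each half is then an arbitrary permutation of $[k]$, and the halves can be chosen independently, so $M_k=(k!)^2$.

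Substituting $M_k=(k!)^2$ into Theorem~\ref{thm: repeatedAvoidance} gives the stated formula. I expect the only real work to be the characterization step, specifically justifying the common point $t$ cleanly. Instead of citing Helly, I would argue directly: take $t=\min_a s_a - \tfrac12$. If some $f_b>t$, then $f_b>s_a$ for the letter $a$ attaining the minimum, which is a forbidden $aabb$.
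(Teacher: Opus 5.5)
Your proof is correct and follows the paper's route: apply Theorem~\ref{thm: repeatedAvoidance} and show $M_k(1122,2211)=(k!)^2$ by observing that every first occurrence must precede every second occurrence, so each half is an independent permutation of $[k]$. Your $t=\min_a s_a-\tfrac12$ argument, together with the converse check, supplies a rigorous justification of this characterization, which the paper only asserts.
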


\begin{proof}
    Using Theorem \ref{thm: repeatedAvoidance} we have to show that $M_k(1122,2211) = (k!)^2$. If a permutation of $[k] \cup [k]$ avoids $1122$ and $2211$ this means the first occurrence of every entry must appear before we can have any second occurrence. There are $k!$ ways to arrange the first entries and $k!$ ways to arrange the second entries.
\end{proof}

\begin{theorem}
    The number of parking functions avoiding $111$ and $\Lambda = \{1221, 2112,1212,2121\}$ is
    \[
    p_n(111,\Lambda) = \sum_{k=0}^{\lfloor n/2 \rfloor} k! (n-2k)! \binom{n}{2k} a_{n,k}.
    \]
\end{theorem}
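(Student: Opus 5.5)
By Theorem~\ref{thm: repeatedAvoidance}, it suffices to show that $M_k(1221,2112,1212,2121)=k!$ for every $k\ge 0$. Substituting this value into the formula of Theorem~\ref{thm: repeatedAvoidance} gives the stated sum immediately.

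Let $w$ be a permutation of $[k]\cup[k]$, and look at any two distinct values $a<b$. Restricted to the letters $a,a,b,b$, the subword of $w$ is one of six words: $aabb$, $bbaa$, $abba$, $baab$, $abab$, $baba$. In terms of patterns these are $1122$, $2211$, $1221$, $2112$, $1212$, $2121$ respectively. Avoiding all four patterns in $\Lambda$ therefore forces every pair of values to appear as $aabb$ or $bbaa$, that is, as two disjoint adjacent-type blocks with no interleaving.

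I claim this forces both copies of each value to be adjacent in $w$. Suppose the two copies of $a$ had some letter $c\neq a$ between them. Then, restricted to $\{a,c\}$, the subword contains $a\,c\,a$, which is neither $aacc$ nor $ccaa$. This is a contradiction. Hence $w$ has the form $x_1x_1x_2x_2\cdots x_kx_k$, where $x_1\cdots x_k$ is a permutation of $[k]$.

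Conversely, every word of this form avoids all four patterns, because each pair of values appears in the order $aabb$ or $bbaa$. So these words are counted by the $k!$ permutations $x_1\cdots x_k$, and $M_k(\Lambda)=k!$.

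There is no real obstacle here. The only step needing care is the case analysis of the six arrangements of a pair of values. The rest is a direct appeal to Theorem~\ref{thm: repeatedAvoidance}, whose insertion argument for the non-repeated entries already handles how the repeated and non-repeated entries interleave.
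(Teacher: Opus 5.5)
Your proof is correct and follows the same route as the paper: reduce via Theorem~\ref{thm: repeatedAvoidance} to showing $M_k(\Lambda)=k!$, then observe that the two copies of each value must be adjacent, so the word is determined by the order $x_1\cdots x_k$. You also supply the detail the paper leaves implicit, namely that any letter $c$ between the two copies of $a$ yields one of $acac$, $caca$, $acca$, each of which is an occurrence of a pattern in $\Lambda$.
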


\begin{proof}
    Using Theorem \ref{thm: repeatedAvoidance} we have to show that $M_k(\Lambda) = k!$. If a permutation of $[k] \cup [k]$ avoids $\Lambda$ this means the first and second occurrence of entry must appear consecutively. There are $k!$ ways to arrange the consecutive entries.
\end{proof}

\section{Additional Results} \label{sec: additional}

The methods in this paper can be extended in many ways. In this section we outline a few of them, proving some results along the way.

\subsection{Limited preferences and selected triples}

The first direction we consider is limited preference paired with avoiding select triples. For example,

\begin{theorem}
    Let $\Lambda = \{111,112,211,1221\}$. For $n \geq 1$, $p_n(\Lambda) = s(n+1,2)$, where $s(m,2)$ denotes the unsigned Stirling number of the first kind.
\end{theorem}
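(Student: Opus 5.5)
The plan is a direct structural count. Since $s(n+1,2)=n!\,H_n=n!\sum_{j=1}^n \frac1j$, I will show that the permutations contribute $n!$ and that the parking functions with repeated entries contribute $n!\sum_{j=2}^n\frac1j$.

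\emph{Step 1 (structure).} Let $\pi$ avoid $\Lambda$, and suppose a value $a$ occurs twice in $\pi$ (at most twice, by $111$-avoidance). If some $b>a$ occurred after both copies of $a$, we would get a $112$. If it occurred before both copies, we would get a $211$. So every entry larger than $a$ lies strictly between the two copies of $a$. Entries smaller than $a$ are unconstrained, since $a c a$ with $c<a$ is a $212$ and $c a a$, $a a c$ are $122$, $221$, none of which is forbidden.

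Now suppose two values $a<b$ are both repeated. Then both copies of $b$ lie between the two $a$'s, giving $abba$, which is a $1221$. Hence $\pi$ has at most one repeated value. Conversely, any word with a single repeated value $a$, all larger entries between the two $a$'s, and the correct underlying multiset avoids all of $\Lambda$. One checks that $1221$ needs two repeated values, and that $112$ and $211$ are excluded by the placement.

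\emph{Step 2 (multisets).} With no repeat, $\pi$ is a permutation, giving $n!$ choices. With exactly one repeat, as in the proof of Theorem~\ref{thm: 211-122}, the sorted parking function is determined by a pair $a<b\le n$. The value $a$ is doubled, $b$ is omitted, and all other values of $[n]$ appear once. For fixed $a$ there are $n-a$ choices of $b$. In every case there are $a-1$ entries smaller than $a$ and $n-a-1$ entries larger than $a$.

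\emph{Step 3 (arrangements).} Fix $a$ and $b$. First form the block $a\,L\,a$, where the $n-a-1$ larger entries are in any order; this gives $(n-a-1)!$ ways. Then insert the $a-1$ distinct smaller entries into arbitrary positions of a word of total length $n$; this gives $n!/(n-a+1)!$ ways. The product is $\frac{n!}{(n-a)(n-a+1)}$.

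Multiplying by the $n-a$ choices of $b$ gives $\frac{n!}{n-a+1}$. Summing over $a=1,\dots,n-1$ gives $n!\sum_{j=2}^{n}\frac1j$. Adding the $n!$ permutations yields $n!H_n=s(n+1,2)$, via the standard identity $s(n+1,2)=n!H_n$, which follows from the recurrence $s(n+1,2)=n\,s(n,2)+(n-1)!$.

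The main care point is Step 1. I must verify that the characterization is exactly right in both directions, in particular that smaller entries really are unconstrained. The counting in Step 3 is routine once the structure is pinned down.
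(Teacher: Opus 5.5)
Your proposal is correct, and it takes a different route from the paper's. The paper never characterizes the global structure. It proves the recurrence $p_{n+1}=(n+1)p_n+n!$ by splitting on how many times the value $1$ occurs:
\begin{itemize}
\item A single $1$ can be deleted and the remaining entries shifted down, giving $(n+1)p_n$.
\item A doubled $1$ is forced into the first and last positions, and the middle must be repeat-free. That middle is counted as $n!$ by choosing which value of $\{2,\dots,n+1\}$ to omit and then ordering the rest.
\end{itemize}
This matches $s(n+2,2)=(n+1)s(n+1,2)+n!$ directly, with no closed form needed.

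You instead prove the full non-recursive description: at most one repeated value $a$, with every larger entry nested between its two copies. You then count directly and need the identity $s(n+1,2)=n!H_n$. The two computations agree term by term. Your $a=1$ contribution at length $n+1$ is $(n+1)!/(n+1)=n!$, which is exactly the paper's doubled-$1$ term. Your terms with $a\ge 2$ are $(n+1)\cdot n!/(n-(a-1)+1)$, which is the paper's $(n+1)p_n$ refined by the repeated value.

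The paper's argument is shorter and ties straight to the defining recurrence. Yours costs the harmonic-number identity but gives more: an explicit characterization of the class, and the refinement that exactly $n!/(n-a+1)$ of these parking functions have repeated value $a$.
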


\begin{proof}
    We will show that for $n \geq 1$, $p_1 = 1$ and $p_{n+1} = (n+1)p_{n} + n!$, which is precisely the recursion $s(n+2,2) = (n+1)s(n+1,2) + n!$ satisfied by $s(n+1,2)$.

    It is clear that $p_1 = 1$. Now, notice any such parking function of length $n+1$ can either have one or two copies of $1$. If there is only one copy of $1$, we can form these parking functions by adding a $1$ to $\pi \in P_n(\Lambda)$. Indeed, we could insert $1$ into any of the $n+1$ positions and shift all the other entries up by one. 
    
    If there are two copies of $1$, they must be the first and last entry to avoid $112$ and $211$. The other entries must then have no repeats, otherwise the parking function contains $1221$. To fill these entries, we can first select a permutation of size $n-1$ in $(n-1)!$ ways. We can then choose at most one of entries, and all entries after, to increase by one. There are $n$ choices here, leaving a total of $n!$ options for the entries between the $1$'s.
\end{proof}

\begin{example}
    The parking function $\pi = 125374891 \in P_{9}(111,112,211,1221)$. Since $\pi_1 = \pi_{9} = 1$, the entries between the $1$'s must not have any repeats. These entries in the middle correspond to the permutation $1425367 \in \S_7$ where we shift all the entries $\geq 5$ up by one.
\end{example}

\subsection{Fewer limitations on preferences}

One other direction to consider is allowing less limitations, or no limitations, on the preferences. In a forthcoming paper, we prove more results in this vein. As an example, the following is a generalization of Theorem \ref{thm: 211-122},

\begin{theorem} \label{thm: 221-112-1k} For $n\geq 1$ and $\ell\ge1$,
    \[
    p_n(1^{\ell+1},221,112) \;=\; n!\;+\sum_{j=2}^{\ell}\binom{n}{j}\,(n-j+1)! = n!\Big(1+\sum_{j=2}^{\ell}\frac{n-j+1}{j!}\Big).
    \]
\end{theorem}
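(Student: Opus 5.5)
We follow the proof of Theorem~\ref{thm: 211-122}, which is the case $\ell=2$. Note that $221$ and $112$ are reverses of $122$ and $211$, so this is the reversed version of that theorem. There are two steps: a structural lemma forcing at most one repeated value, and then a direct count.

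\textbf{Structural step.} Let $\pi$ avoid $221$ and $112$. Suppose two distinct values $a<b$ each occur at least twice. Restrict to two copies of $a$ and two copies of $b$; this subword is one of the six arrangements $aabb, abab, abba, baab, baba, bbaa$.
\begin{itemize}
\item The arrangements $aabb$, $abab$, $abba$ contain $aab$, which is a $112$.
\item The arrangements $baab$, $baba$, $bbaa$ contain $bba$ or $baa$, and one checks that each of them contains $221$ or $112$.
\end{itemize}
This is exactly the case check from Theorem~\ref{thm: 211-122} read in reverse, so I would simply invoke it. Hence at most one value $r$ is repeated, occurring $j$ times with $2\le j\le \ell$. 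All other values occur once.

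\textbf{Placement step.} Fix the multiset, with a single value $r$ of multiplicity $j\ge 2$. Avoiding $112$ means no element larger than $r$ can follow two copies of $r$. Avoiding $221$ constrains only repeated larger values, and there are none. One should check whether $r$ is forced to be the smallest value.

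\textbf{Counting the multisets.} Sorted, these are weakly increasing parking functions with one value of multiplicity $j$ and all other values distinct. Generalizing the $\binom{n}{2}$ argument, I claim there are $\binom{n}{j}$ of them. Choose $j$ positions $i_1<\dots<i_j$ in $[n]$. Let the repeated value be $i_1$, and delete the values $i_2,\dots,i_j$ from the sequence $1,\dots,n$. The parking condition holds because the repeated copies compensate for the deleted values, which all lie above $i_1$. Conversely, any such parking function has missing values exactly at some $j-1$ values above $r$, and the map reverses. I would verify this bijection carefully, since it is the counting heart of the result.

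\textbf{Counting the arrangements.} The target count $(n-j+1)!$ suggests the following. The $j$ copies of $r$ and the $n-j$ distinct values are arranged with all but one copy of $r$ forced into a fixed block, either at the start or at the end. The remaining $n-j+1$ items (one copy of $r$ plus the $n-j$ distinct values) are then permuted freely.

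This is the step I expect to be the main obstacle, since the conditions must be checked exactly. In Theorem~\ref{thm: 211-122} the first entry was forced to be the repeated entry. By analogy, for $\{221,112\}$ the last $j-1$ entries should all be $r$: any element after the second-to-last copy of $r$ that is larger than $r$ gives $112$. I would reconcile this against the small case $n=2$, $j=2$ ($\pi=11$) and the case $n=3$, $\ell=2$. In the latter the formula should give $6+\binom32\cdot 2=12$, and I would check this by listing the arrangements of $112$ and $113$ directly.

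Finally, summing over $j$ gives $n!+\sum_{j=2}^{\ell}\binom{n}{j}(n-j+1)!$. The closed form follows from $\binom{n}{j}(n-j+1)!=n!\,(n-j+1)/j!$.
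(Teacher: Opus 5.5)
Your outline matches the paper's proof. At most one value repeats, and your appeal to the reversal of the case check in Theorem~\ref{thm: 211-122} is valid. The sorted multisets are counted by $\binom{n}{j}$ through the same choose-$j$-values bijection, and the valid words are $w\,r^{j-1}$, giving $(n-j+1)!$. However, the arrangement step, which you yourself flag as the obstacle, is not established, and the reasoning you give for it is wrong in two places.

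\textbf{First, the constraint from $221$.} You claim avoiding $221$ ``constrains only repeated larger values, and there are none,'' and you leave open whether $r$ must be the smallest value. It need not be: in your own bijection $r=i_1$ ranges over $1,\dots,n-j+1$. For example, the multiset $\{1,2,2\}$ with $n=3$ has valid words $122$ and $212$. Your proposed check, listing only $112$ and $113$, would give $6+2+2=10$ rather than $12$. When $r$ is not the minimum, $221$ genuinely matters, since $r\,r\,c$ with $c<r$ is a $221$. Your tail argument uses only $112$, so it never excludes such $c$.

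\textbf{Second, the wrong copy.} Your argument refers to the second-to-last copy of $r$, which is the wrong copy. For $j=2$ the claim is false: in $131$ the $3$ follows that copy without forming a $112$. For $j\ge 4$ it does not yield your conclusion that the last $j-1$ entries are $r$: it does not rule out $11211$, which contains $112$.

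The missing step is the paper's one-line argument. Let $p_1<p_2$ be the positions of the \emph{first two} copies of $r$. Any entry $c\neq r$ after $p_2$ gives $r\,r\,c$, which is a $112$ if $c>r$ and a $221$ if $c<r$. So everything from $p_2$ on is $r$, i.e.\ $\pi=w\,r^{j-1}$ with exactly one $r$ in $w$. Conversely, every such word avoids both patterns: in any pair of equal entries the second lies in the final block of $r$'s, after which no different value occurs.

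There are also some small slips. In the structural step, $abba$ does not contain $aab$; it contains the $221$ occurrence $bba$. Likewise $baab$ is caught by $aab$, not by $baa$, which is a $211$. Separately, the identity $\binom{n}{j}(n-j+1)!=n!\,(n-j+1)/j!$ used for the closed form, here as in the paper, holds only for $j\le n+1$. For $j\ge n+2$ the right side is negative, so that form needs $\ell\le n+1$ (or the sum truncated).
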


\begin{proof}
    First, notice such a parking function can have at most one repeated entry. If two entries $a,b$ repeat, this means the parking function contains one of $aabb, abab, baba, baab, abba, bbaa$, all of which contain either a $221$ or $112$ pattern.

    The number of parking functions with no repeated entries is $n!$, this corresponds to $j=1$. If we are allowed to have an entry occur at most $j$ times, the number of weakly increasing parking functions with one entry repeating $j$ times is $\binom{n}{j}$. Indeed, if we select $j$ entries from $n$, let the repeated entry be the minimal entry we select, remove the other entries we select and keep the remaining $n-j$ entries. 
    
    After selecting the entries, we must arrange them. Let $a$ be the repeated entry. Any entry following the second copy of $a$ forms a $112$ (if larger) or a $221$ (if smaller) with the first two copies of $a$, so every entry after the second copy of $a$ equals $a$. This means the parking function is $w\,a^{\,j-1}$, where $w$ is any of the $(n-j+1)!$ orderings of the $n-j$ other entries together with one copy of $a$. It is clear that every such word avoids $221$ and $112$. Summing over $j$ gives the first formula, and the second follows from $\binom nj(n-j+1)!=n!\,(n-j+1)/j!$.
\end{proof}

\begin{corollary}
For $n\geq 1,$
    $p_n(111,221,112) = n!+ (n-1)! \binom{n}{2} = \frac{(n+1)!}{2}$.
\end{corollary}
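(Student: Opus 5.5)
This corollary is the specialization $\ell=2$ of Theorem~\ref{thm: 221-112-1k}. We also need to check that the resulting closed form matches $\frac{(n+1)!}{2}$, the value obtained in Theorem~\ref{thm: 211-122} for the equinumerous class $p_n(111,112,221)$.

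First, set $\ell=2$ in the first formula of Theorem~\ref{thm: 221-112-1k}. The pattern $1^{\ell+1}$ becomes $111$, and the sum over $j$ has only the term $j=2$. This gives
\[
p_n(111,221,112)=n!+\binom{n}{2}(n-1)!,
\]
which is the first expression in the statement. The case $n=1$ needs no separate treatment, since $\binom{1}{2}=0$ and the count is $1$.

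Second, simplify using the second form in Theorem~\ref{thm: 221-112-1k}:
\[
n!\Big(1+\frac{n-1}{2}\Big)=n!\cdot\frac{n+1}{2}=\frac{(n+1)!}{2}.
\]

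As a consistency check, note that $\{221,112\}$ is the reverse of $\{122,211\}$, and reversal preserves the parking-function property. So the count must agree with Theorem~\ref{thm: 211-122}, and it does.

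There is no real obstacle here; the only care needed is to confirm that the general theorem is valid at $\ell=2$, which is true because its hypothesis is $\ell\ge1$.
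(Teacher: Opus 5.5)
Your proposal is correct and matches the paper's proof: specialize Theorem~\ref{thm: 221-112-1k} to $\ell=2$ and simplify $n!\bigl(1+\tfrac{n-1}{2}\bigr)=\tfrac{(n+1)!}{2}$. The paper likewise notes that this agrees with Theorem~\ref{thm: 211-122} via the reversal $\{221,112\}\leftrightarrow\{122,211\}$, which is your consistency check.
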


\begin{proof}
    This follows by letting $\ell = 2$ in Theorem \ref{thm: 221-112-1k}, recovering Theorem \ref{thm: 211-122} up to symmetry.
\end{proof}

\bibliographystyle{amsplain}

\begin{thebibliography}{99}

\bibitem{AP23} A. Adeniran and L. Pudwell, Pattern avoidance in parking functions, \textit{Enumer. Comb. Appl.} \textbf{3} (2023), no.~3, Art.~S2R17.

\bibitem{AGPS19} K. Archer, A. Gregory, B. Pennington, and S. Slayden, Pattern restricted quasi-Stirling permutations, \textit{Australas. J. Combin.} \textbf{74} (2019), no.~3, 389--407.

\bibitem{AL} K. Archer and R. Laudone, Pattern avoidance in non-crossing and non-nesting permutations, \textit{Enumer. Comb. Appl.} \textbf{6} (2026), no.~1, Art.~S2R1.

\bibitem{CCEG} G. Cerbai, A. Claesson, D. C. Ernst, and H. Golab, Pattern-avoiding Cayley permutations via combinatorial species, \textit{Australas. J. Combin.} \textbf{94} (2026), 267--299.

\bibitem{CM01} P. Chassaing and J.-F. Marckert, Parking functions, empirical processes, and the width of rooted labeled trees, \textit{Electron. J. Combin.} \textbf{8} (2001), no.~1, Art.~R14.

\bibitem{EL24} S. Elizalde and A. Luo, Pattern avoidance in nonnesting permutations, \textit{Discrete Math. Theor. Comput. Sci.} \textbf{27} (2025), no.~1, \#1.

\bibitem{FV} L. Ferrari and F. Verciani, A new approach to Naples parking functions through complete parking preferences, and its enumerative consequences, \textit{Electron. J. Combin.} \textbf{33} (2026), no.~3, \#P3.54.

\bibitem{FR74} D. Foata and J. Riordan, Mappings of acyclic and parking functions, \textit{Aequationes Math.} \textbf{10} (1974), 10--22.

\bibitem{JM09} V. Jel\'inek and T. Mansour, Wilf-equivalence on $k$-ary words, compositions, and parking functions, \textit{Electron. J. Combin.} \textbf{16} (2009), no.~1, Art.~R58.

\bibitem{L26} R. Laudone, Pattern avoidance in canon permutations, arXiv:2608.21351 (2026).

\bibitem{QR19} D. Qiu and J. Remmel, Patterns in words of ordered set partitions, \textit{J. Comb.} \textbf{10} (2019), no.~3, 433--490.

\bibitem{Yan2015} C. H. Yan, Parking functions, in \textit{Handbook of Enumerative Combinatorics} (M. B\'ona, ed.), CRC Press, 2015, pp. 835--893.

\bibitem{J25} J. Yan, Results on pattern avoidance in parking functions, \textit{Enumer. Comb. Appl.} \textbf{5} (2025), no.~1, Art.~S2R2.

\end{thebibliography}

\noindent {\tiny \emph{The views expressed in this paper are those of the authors and do not reflect the official policy or position of the U.S. Naval Academy, Department of the Navy, the Department of Defense, or the U.S. Government.} \par}

\end{document}